\newtheorem{thm}{Théorème}[section]
\newtheorem{THM}{Théorème}
\newtheorem{PROP}{Proposition}[section]
\newtheorem{cor}[thm]{Corollaire}
\newtheorem{prop}[thm]{Proposition} 
\newtheorem{lemme}[thm]{Lemme}
\newtheorem{definition}[thm]{D\'efinition}
\newtheorem{rem}[thm]{Remarque}
\newcommand{\field}[1]{\mathbb{#1}}
\newcommand{\CC}{\field{C}}
\newcommand{\RR}{\field{R}}
\newcommand{\QQ}{\field{Q}}
\newcommand{\DD}{\field{D}}
\begin{document}
 
\title[Uniformisation de l'espace des feuilles]{Uniformisation de l'espace des feuilles de certains feuilletages de codimension un}
\vskip 10 pt





\selectlanguage{french}

\author{FR\'ED\'ERIC TOUZET$^1$}
\thanks{$^1$ IRMAR, Campus de Beaulieu 35042 Rennes Cedex, France, frederic.touzet@univ-rennes1.fr}

\begin{abstract} Dans cet article, nous étudions, sur des variétés Kähler compactes, les feuilletages holomorphes (éventuellement singuliers) dont le fibré conormal est pseudo-effectif. En utilisant la notion de courant à singularités minimales, nous montrons que l'on peut munir canoniquement l'espace des feuilles d'une métrique à courbure constante négative ou nulle dont les éventuelles dégénerescences sont localisés le long d'une hypersurface invariante ``rigidement plongée'' dans la variété.\\ 
 
{\sc Abstract.} This paper deals with codimension one (may be singular) foliations on compact Kälher manifolds whose conormal bundle is assumed to be pseudo-effective. Using currents with minimal singularities, we show that one can endow the space of leaves with a metric of constant non positive curvature wich may degenerate on a ``rigidly'' embedded invariant hypersurface.

\end{abstract}

\maketitle

\section{Notations et rappels}
Soit $M$ une vari\'et\'e k\"ahlerienne compacte et connexe. 

Une {\it distribution holomorphe de codimension 1}  (\'eventuellement singuli\`ere) sur $M$ correspond \`a la donn\'ee d'une section $\omega$ {\it holomorphe non triviale} de $\Omega^1 (M)\otimes L$ o\`u $L$ est un  fibr\'e en droites holomorphe. D'un point de vue dual, on peut également définir un tel objet par un sous faisceau saturé $\mathcal F$ de rang $n-1$ ($n=Dim_{\CC}M$) de $TM$ (en l'occurence, le sous faisceau annulateur de $\omega$).

  Le th\'eor\`eme suivant, d\^u \`a Jean-Pierre Demailly précise les hypoth\`eses ``minimales'' que l'on peut faire sur le fibr\'e $L$ pour que cette distribution d\'efinisse un feuilletage holomorphe de codimension $1$.

\medskip

\begin{thm}\label{de} \cite{de} Soit une distribution holomorphe singuli\`ere donn\'ee comme ci-dessus; supposons que le dual $L^*$ du fibr\'e $L$ soit pseudo-effectif; alors la forme $\omega$ est intégrable.
\end{thm}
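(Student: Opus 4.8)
\emph{Reformulation.} The plan is first to convert integrability into the vanishing of one explicit section. If $e_{L}$ is a local holomorphic frame of $L$ and $\omega=s\otimes e_{L}$ with $s$ a local holomorphic $1$-form, a change of frame $e_{L}\mapsto g\,e_{L}$ rescales $s$ by $g^{-1}$, hence rescales $s\wedge ds$ by $g^{-2}$ (the term in $dg$ is absorbed because $s\wedge dg\wedge s=0$), which is exactly the transition law of $L^{\otimes 2}$. Thus $u:=\omega\wedge d\omega$ is a well-defined holomorphic section of $\Omega^{3}_{M}\otimes L^{\otimes 2}$, of pure type $(3,0)$ since $\omega$ is holomorphic. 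By the Frobenius criterion the distribution $\ker\omega$ is involutive if and only if $u=0$, so it suffices to prove $u=0$.

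\emph{The key identity.} Fix a K\"ahler form $\kappa$ on $M$ and let $h$ be \emph{any} Hermitian metric on $L$, with Chern connection $\nabla$; write $D'$ for the $(1,0)$-part of $\nabla$ acting on $L$-valued forms. In a local frame $\omega=s\otimes e_{L}$ one has $D'\omega=(\partial s)\otimes e_{L}+(s\wedge\partial\varphi)\otimes e_{L}$ up to sign, where $\varphi$ is the local weight of $h$; wedging with $\omega=s\otimes e_{L}$ kills the weight term since $s\wedge s=0$, so, for every metric $h$,
\[
u=\omega\wedge D'_{h}\omega .
\]
Hence $D'_{h}\omega=0$ would force $u=0$. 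This is the line-bundle-twisted analogue of the fact that a holomorphic $1$-form on a compact K\"ahler manifold is $d$-closed: when $L^{*}$ carries a non-zero section $\sigma$, the form $\sigma\omega\in H^{0}(\Omega^{1}_{M})$ is honest, hence $d$-closed by Hodge theory, and then $\sigma^{2}u=(\sigma\omega)\wedge d(\sigma\omega)=0$, whence $u=0$; so the point is to produce a good metric when $L^{*}$ is only pseudo-effective.

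\emph{The Bochner argument.} Since $L^{*}$ is pseudo-effective, $L$ admits a (possibly singular) Hermitian metric $h$ whose curvature current satisfies $i\Theta_{h}(L)\le 0$. As $M$ is compact, this forces the local weights of $h$ to be bounded above, so $|\omega|_{h}$, $|D'_{h}\omega|_{h}$ and $|u|_{h^{\otimes 2}}$ are bounded, hence $L^{2}$ for the metric $\kappa$. Viewed as an $L$-valued $(1,0)$-form, $\omega$ is $\bar\partial$-closed (being holomorphic) and $\bar\partial^{*}$-closed for bidegree reasons, hence $\Delta''$-harmonic. The Bochner--Kodaira--Nakano identity $\Delta''=\Delta'+[i\Theta_{h}(L),\Lambda]$, paired with $\omega$ and integrated over $M$, yields
\[
0=\|D'_{h}\omega\|^{2}+\|D'^{*}_{h}\omega\|^{2}+\langle\langle\,[i\Theta_{h}(L),\Lambda]\,\omega,\omega\,\rangle\rangle .
\]
On $(1,0)$-forms the curvature operator $[i\Theta_{h}(L),\Lambda]$ has eigenvalue $\gamma_{k}-\operatorname{tr}_{\kappa}(i\Theta_{h}(L))=-\sum_{j\ne k}\gamma_{j}$ on the $k$-th basis vector, which is $\ge 0$ whenever $i\Theta_{h}(L)\le 0$; so all three terms vanish, in particular $D'_{h}\omega=0$, and therefore $u=\omega\wedge D'_{h}\omega=0$, i.e. $\omega$ is integrable.

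\emph{The main obstacle.} The delicate step is the one just above when $L^{*}$ is only pseudo-effective (not nef): then $h$ must genuinely be singular, and both the Bochner--Kodaira--Nakano identity and the integration by parts have to be justified with a singular weight. I see two routes. One is to invoke directly the $L^{2}$-theory of $\bar\partial$ with singular metrics (all the quantities involved being $L^{2}$, and the polar locus of $h$ having measure zero). The other is to regularise: by Demailly's approximation theorem, take smooth metrics $h_{\varepsilon}$ on $L$ with $i\Theta_{h_{\varepsilon}}(L)\le\varepsilon\kappa$, run the argument to get $\|D'_{h_{\varepsilon}}\omega\|^{2}\le (n-1)\,\varepsilon\,\|\omega\|_{h_{\varepsilon}}^{2}$, use $u=\omega\wedge D'_{h_{\varepsilon}}\omega$ and $|u|_{h_{\varepsilon}^{\otimes 2}}\le|\omega|_{h_{\varepsilon}}\,|D'_{h_{\varepsilon}}\omega|_{h_{\varepsilon}}$ for every $\varepsilon$, and let $\varepsilon\to 0$, concluding $u=0$ by Fatou once the approximating weights are suitably controlled. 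Everything else in the argument is formal.
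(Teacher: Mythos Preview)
The paper does \emph{not} give its own proof of this theorem: it attributes it to Demailly \cite{de} (and also cites \cite{brpt}) and explicitly says ``en admettant l'int\'egrabilit\'e de $\mathcal F$ pour laquelle on renvoie le lecteur \`a \cite{de} ou encore \cite{brpt}''. What the paper \emph{does} prove is the sharper identity $d\omega=-\partial\varphi\wedge\omega$ (equation~(\ref{integ})), but under the standing assumption that $\mathcal F$ is already integrable, and by a different technique: it shows that the signed measure $i\partial\bar\partial(\eta_T\wedge\theta^{n-2})$ is positive (because $e^{2\varphi}|f|^2$ is the exponential of a psh function), extends this across $\mathrm{Sing}\,\mathcal F$ via \cite{albas}, and concludes by exactness. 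So there is no Bochner--Kodaira--Nakano identity in the paper's argument at all.

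Your route is essentially Demailly's original one, and the reformulation ($u=\omega\wedge d\omega\in H^0(\Omega^3_M\otimes L^{2})$), the metric-independence identity $u=\omega\wedge D'_h\omega$, and the eigenvalue computation for $[i\Theta_h(L),\Lambda]$ on $(1,0)$-forms are all correct.

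The genuine gap is in your handling of the singular metric, which you yourself flag as ``the main obstacle'' but then do not resolve. Your route~(b) asserts the existence of smooth metrics $h_\varepsilon$ on $L$ with $i\Theta_{h_\varepsilon}(L)\le\varepsilon\kappa$; this is exactly the statement that $c_1(L^*)$ is \emph{nef}, and it is \emph{false} in general for a merely pseudo-effective class. Demailly's regularisation of quasi-psh functions on a compact manifold produces smooth approximants whose loss of positivity is bounded below by (a function comparable to) the Lelong numbers of the original weight, and those need not vanish. With only a fixed bound $i\Theta_{h_\varepsilon}(L)\le C\kappa$ the Bochner identity yields $\|D'_{h_\varepsilon}\omega\|^2\le (n-1)C\|\omega\|_{h_\varepsilon}^2$, which gives no contradiction as $\varepsilon\to 0$. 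Your route~(a) (``invoke the $L^2$-theory'') is in principle viable, but as written it is a placeholder, not an argument: one has to say precisely in what sense $D'_h\omega$ is defined when $\partial\varphi$ is only a current, and why the integration by parts in the Bochner identity survives. In Demailly's paper this is exactly where the work is; the clean way is to regularise the psh weight \emph{locally} by convolution (which preserves plurisubharmonicity with no loss), perform the integration by parts on forms with compact support, and pass to the limit using that $|\omega|_h$ is genuinely bounded. Until you supply one of these mechanisms, the proof is incomplete precisely at the point that separates the nef case from the pseudo-effective case.
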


Rappelons \`a cet effet qu'une classe de cohomologie $\alpha\in H^{1,1}(M,\RR)$ est dite {\it pseudo-effective} si $\alpha$ peut \^etre repr\'esent\'ee par un courant positif ferm\'e de bidegr\'e $(1,1)$.

On dira alors qu'un fibr\'e  en droites holomorphe $L$ est {\it pseudo-effectif} si $c_1(L)$ est pseudo-effective ou,de fa\c con \'equivalente,  si l'on peut implanter sur $L$ une m\'etrique $h(x,v)= {|v|}^2e^{-2\varphi (x)}$ o\`u le poids local $\varphi$ est une fonction plurisousharmonique ({\it psh} pour faire bref). Le courant $T$ est alors \'egal \`a la forme de courbure d'une telle m\'etrique (\'eventuellement singuli\`ere) suivant la formule:
\begin{equation}\label{courbure}
 T=\frac{-i}{2\pi}\partial\overline{\partial}\log h=\frac{i}{\pi}\partial\overline{\partial}\varphi.
\end{equation}

Dans l'\'enonc\'e du th\'eor\`eme \ref{de}, on retrouve un r\'esultat classique lorsque $L^*={\mathcal O}(D)$ o\`u $D$ est un diviseur effectif: 
\medskip 
\begin{thm}  Toute forme holomorphe sur une vari\'et\'e k\"alherienne compacte est ferm\'ee (donc en particulier int\'egrable).
\end{thm}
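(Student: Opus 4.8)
\emph{Proof sketch.} Let $\omega$ be a holomorphic $p$-form on the compact K\"ahler manifold $M$ (the case $p=1$ is the one used below), fix a K\"ahler metric with K\"ahler form $\eta$, and write $\Delta_d$ and $\Delta_{\bar\partial}$ for the associated Laplacians. The plan is to prove that $\omega$ is harmonic. Being holomorphic, $\omega$ is of pure type $(p,0)$ and satisfies $\bar\partial\omega=0$; moreover $\bar\partial^{*}\omega=0$ for purely type-theoretic reasons, since $\bar\partial^{*}$ would send it to a form of type $(p,-1)$. Hence $\Delta_{\bar\partial}\omega=0$; by the K\"ahler identity $\Delta_d=2\Delta_{\bar\partial}$ this gives $\Delta_d\omega=0$, and on a compact manifold a $d$-harmonic form is in particular $d$-closed. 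So $d\omega=0$.

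There is an elementary variant that makes the link with Theorem~\ref{de} transparent. From $\bar\partial\omega=0$ we get $d\omega=\partial\omega$, a form of type $(p+1,0)$; a one-line computation ($\partial^2=0$ and $\bar\partial\partial\omega=-\partial\bar\partial\omega=0$) shows that $\partial\omega$, and hence its conjugate $\overline{\partial\omega}$, is $d$-closed. Using $d\eta=0$ and Stokes' theorem on $M$ we obtain, for $p\le n-1$ (the cases $p\ge n$ being trivial),
\[
\int_M i^{(p+1)^2}\,\partial\omega\wedge\overline{\partial\omega}\wedge\eta^{\,n-p-1}
 = i^{(p+1)^2}\int_M d\!\left(\omega\wedge\overline{\partial\omega}\wedge\eta^{\,n-p-1}\right)=0 .
\]
Since $i^{(p+1)^2}\partial\omega\wedge\overline{\partial\omega}$ is a positive $(p+1,p+1)$-form, the integrand is $\ge 0$ pointwise and vanishes exactly where $\partial\omega$ does; therefore $\partial\omega\equiv 0$, i.e.\ $d\omega=0$. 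In particular $\omega\wedge d\omega=0$, which is the integrability predicted by Theorem~\ref{de} when $L^{*}=\mathcal O$.

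The only genuinely delicate step is the sign bookkeeping: one must verify that in the exterior derivative above the cross term $\pm\,\omega\wedge d\!\left(\overline{\partial\omega}\wedge\eta^{\,n-p-1}\right)$ indeed drops out — which it does precisely because both $\overline{\partial\omega}$ and $\eta$ are $d$-closed — and that $i^{(p+1)^2}$ is the correct normalization making $i^{(p+1)^2}\partial\omega\wedge\overline{\partial\omega}$ a genuinely positive form. Everything else is routine, and in any case the Hodge-theoretic argument of the first paragraph sidesteps these bookkeeping issues entirely.
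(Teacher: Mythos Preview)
Your two arguments are both correct and are in fact the standard textbook proofs of this classical result. There is nothing to fix.

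Note, however, that the paper does not give its own proof of this theorem: it is quoted as a \emph{r\'esultat classique}, presented as the special case $L^*=\mathcal O(D)$ (with $D$ effective, in particular $L=\mathcal O$) of Demailly's theorem~\ref{de}, and the text moves on immediately. If one insists on extracting an argument from the paper itself, the closest thing is the derivation of relation~(\ref{integ}) later in Section~1: there the author shows that the positive current $i\partial\bar\partial(\eta_T\wedge\theta^{n-2})$ is exact, hence zero, and reads off the conclusion. Specialized to the case of a trivial $L$ with constant weight $\varphi$, that computation collapses to exactly your second (Stokes/positivity) argument for $p=1$. So your ``elementary variant'' is the one that matches the spirit of the surrounding text; the Hodge-theoretic route is a clean alternative that avoids the sign bookkeeping you flag, at the cost of invoking the K\"ahler identities.
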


Dans ce qui suit, on se propose de pr\'eciser la structure des distributions ({\it a posteriori} intégrables) v\'erifiant les hypoth\`eses du th\'eor\`eme de Demailly . Soit $s\in H^0(M,\Omega^1 (M)\otimes L)\setminus\{0\}$ représentant une telle distribution. Puisque la somme d'une classe effective et d'une classe pseudo-effective reste pseudo-effective, on peut toujours se ramener au cas où $s$ est r\'eguli\`ere en codimension 1, i.e: son lieu d'annulation est un sous-ensemble analytique de codimension $\geq 2$ qui coïncide alors avec le lieu singulier  $ Sing\ {\mathcal F}$ de la distribution. Le fibr\'e $L$ représente alors  le {\it fibr\'e normal} $N_{\mathcal F}$ de la distribution et, par hypoth\`ese, son dual, le fibr\'e conormal  ${N_{\mathcal F}}^*$, est pseudo-effectif. 

Dans la suite il sera également commode de regarder alternativement ${N_{\mathcal F}}^*$ comme le sous faisceau inversible de $\Omega^1 (M)$ dont les sections locales sont les formes s'annulant en restriction à la distribution.\\

Introduisons maintenant quelques notations.

Si $T$ es un $(1,1)$ courant positif fermé, on désignera par $\{T\}$ sa classe de cohomologie dans $H^{1,1}(M,\RR)$. 

Soit $d=\sum_{D\in Div(M)}{\lambda_D}D$ un élément de $Div(M)\otimes\RR$ auquel on peut associer le courant d'intégration $T_d=\sum_{D\in\RR}{\lambda_D}[D]$ .

On posera alors $\{d\}:=\{T_d\}$.\\
 

Donnons une formulation plus pr\'ecise du th\'eor\`eme \ref{de}.

Soit $T=\frac{i}{\pi}\partial\overline{\partial}\varphi$ un courant positif dont la classe de cohomologie $\{T\}$ dans le groupe de N\'eron-S\'everi réel  est égale à $c_1( {N_{\mathcal F}}^*)$.

Ceci permet d'exhiber une $(1,1)$ forme positive globalement d\'efinie
\begin{equation}\label{eta}
\eta_T=\frac{i}{\pi}e^{2\varphi} \omega\wedge \overline{\omega}
\end{equation}
à coefficients $L^\infty_{loc}$ o\`u la $1$ forme holomorphe $\omega$ est un générateur local de $\mathcal F$. Il est facile de constater que $\eta_T$ est bien définie à multiplication près par un réel positif.

L'int\'egrabilit\'e de $\mathcal F$ r\'esulte alors imm\'ediatement de l'\'egalit\'e suivante, \'etablie dans \cite{de}:
  \begin{equation}\label{integ}
d\omega=-\partial\varphi\wedge\omega
\end{equation}

et qui entra\^\i ne les relations
\begin{equation}\label{etafermé}
d\eta_T=0
\end{equation}
(au sens des courants) et

\begin{equation}\label{Tomega=0}
T\wedge \omega=0
\end{equation}

\noindent dont nous ferons usage par la suite.

Pour fixer les idées, précisons comment on peut établir la relation (\ref{integ}) en admettant l'intégrabilité de $\mathcal F$ pour laquelle on renvoie le lecteur à \cite{de} ou encore \cite{brpt}. 

Soit $\theta$ une forme de Kähler sur $M$. Considérons le courant $\Omega=i\partial\overline{\partial}(\eta_T\wedge {\theta}^{n-2})$ où $n$ est la dimension complexe de $M$.
Au voisinage d'un point non situé dans l'ensemble singulier $\mathcal Sing\ \mathcal F$ du feuilletage, on peut écrire en coordonnées holomorphes locales, $\omega=fdz$ où $f$ est holomorphe inversible. Sur $U=M\setminus \mathcal S$, on obtient donc que $$\Omega=- \partial\overline{\partial}(e^{2\varphi+2\log{|f|}})dz\wedge d\overline{z}$$
 est positif (l'exponentielle d'une fonction {\it psh} est {\it psh}). Par ailleurs, $i\eta_T\wedge {\theta}^{n-2}$ est un courant positif à coefficients localement bornés; il est donc dominé par $c\theta^{n-1}$ pour $c>0$ assez grand. En utilisant alors le principal résultat de \cite{albas} et le fait que $Sing\ \mathcal F$ est de codimension au plus deux, on conclut que $\Omega$ est une mesure {\it positive} sur $M$ tout entier et que finalement $\Omega=0$ par exactitude. 

On peut donc en déduire qu'au voisinage d'un point régulier, $e^{2\varphi+2\log{|f|}}$ est {\it pluriharmonique} dans les feuilles et finalement constante car l'exponentielle d'une fonction {\it psh} est pluriharmonique si et seulement si cette fonction est {\it constante}.

L'identité (\ref{integ}) est donc vérifiée sur le complémentaire de $Sing\ \mathcal F$. En appliquant le $\overline{\partial}$, on obtient 
$\partial\overline{\partial}\varphi\wedge\omega=0$ sur $M\setminus Sing\ \mathcal F$ et donc sur $M$ en utilisant que $i\partial\overline{\partial}\varphi$ est positif et codim $Sing\ \mathcal F\geq 2$.  Par hypoellipticité du $\overline{\partial}$, on peut alors conclure que (\ref{integ}) est vraie sur $M$.

Avant d'\'enoncer notre principal r\'esultat, il nous semble utile de rappeler les notions de courants \`a singularit\'es minimales, de courants invariants par un feuilletage et de faire le lien entre ces différents objets.

\section{Courants invariants par holonomie et décomposition de Zariski}

\begin{definition}Soit $\mathcal F$ un feuilletage holomorphe de codimension 1 (\'eventuellement singulier) sur une vari\'et\'e $M$ complexe et $T$ un courant positif ferm\'e de bidegr\'e $(1,1)$ d\'efini sur $M$. On dira que $T$ est $\mathcal F$ invariant (ou invariant par holonomie de $\mathcal F$) si au voisinage de tout point de $M$, on a
       $$T\wedge \omega=0$$
o\`u $\omega$ d\'esigne une 1 forme holomorphe d\'efinissant localement le feuilletage.

\end{definition}

Au voisinage d'un point r\'egulier du feuilletage o\`u celui-ci est d\'ecrit par l'équation $\{dz=0\}$, un tel courant s'exprime donc sous la forme $T=\sqrt{-1}a(z)dz\wedge d\overline{z}$ o\`u $a$ est une mesure positive.

Dans la mesure o\`u nous cherchons \`a produire des m\'etriques tranverses au feuilletage invariante par holonomie \`a courbure constante  (dans un sens qui reste \`a pr\'eciser) et pr\'esentant la plus forte r\'egularit\'e possible, il est assez naturel de consid\'erer la classe plus restreinte des courants \`a singularit\'es minimales dont nous rappelons maintenant la d\'efinition (voir par exemple \cite{bo}).\\

Soit $M$ une vari\'et\'e kählerienne compacte et  $\gamma$ une $(1,1)$ forme ferm\'ee lisse. Dans une classe de cohomologie fix\'ee $\alpha\in H^{1,1}(M,\RR)$, consid\'erons l'ensemble $\alpha[-\gamma]$ des $(1,1)$ courants ferm\'es $T$ tels que $T\geq -\gamma$. On a donc
\begin{equation}
 T=\beta+i\partial\overline{\partial}\varphi
\end{equation}

\noindent o\`u $\beta$ est une $(1,1)$ forme ferm\'ee {\it lisse} telle $\{\beta\}=\{T\}$ et $\varphi$ une fonction {\it quasi-plurisousharmonique}, c'est-à-dire qui s'exprime localement comme la somme d'une fonction ${\it psh}$ et d'une fonction lisse. Ceci permet de d\'efinir, selon la formule habituelle (voir \cite{bo}), le nombre de Lelong $\nu (T,x)$ de $T$ en n'importe quel point $x\in M$ et le nombre de Lelong g\'en\'erique $\nu (T,Y)$ sur un sous-ensemble analytique $Y$.

Soient maintenant deux courants $T_1,T_2\in \alpha[-\gamma]$. on peut donc \'ecrire

   \begin{equation}
 T_1=\beta_1+i\partial\overline{\partial}\varphi_1,\ T_2=\beta_2+i\partial\overline{\partial}\varphi_2
\end{equation}

\begin{definition}  On dira que $T_1$ est moins singulier que $T_2$ et on note $T_1\preceq T_2$ s'il existe une constante r\'eelle $C$ telle que

$$\varphi_1\leq\varphi_2 +C$$

Notons que cette propriété  est ind\'ependante des choix de $\beta_i,\varphi_i$ et que $\preceq$ d\'efinit donc une relation pr\'e-ordre  sur $\alpha[-\gamma]$.
 \end{definition}

\begin{prop} (cf \cite{bo})   $(\alpha[-\gamma],\preceq$ admet un plus petit \'el\'ement.

\end{prop}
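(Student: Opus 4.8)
The plan is to exhibit the least element as a \emph{current with minimal singularities}, constructed by an upper-envelope procedure at the level of potentials. First I would fix once and for all a smooth closed $(1,1)$ form $\beta$ with $\{\beta\}=\alpha$ and set $\theta:=\beta+\gamma$, again a smooth closed $(1,1)$ form. For $T\in\alpha[-\gamma]$ write $T=\beta+i\partial\overline{\partial}\varphi$ with $\varphi$ quasi-psh; the condition $T\geq-\gamma$ then reads exactly $\theta+i\partial\overline{\partial}\varphi\geq0$, i.e. $\varphi$ is $\theta$-plurisubharmonic. Since $M$ is compact, such a $\varphi$ is bounded above and not identically $-\infty$; after subtracting the constant $\sup_M\varphi$ we may normalise it so that $\sup_M\varphi=0$, without changing $T$. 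Let $\mathcal{P}$ be the family (nonempty by hypothesis, as $\alpha[-\gamma]\neq\emptyset$) of all $\theta$-psh functions $\varphi$ with $\sup_M\varphi=0$.

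Next I would set
$$\Phi:=\sup_{\varphi\in\mathcal{P}}\varphi,\qquad u:=\Phi^{*},$$
where $\Phi^{*}$ denotes the upper semicontinuous regularisation. Every $\varphi\in\mathcal{P}$ satisfies $\varphi\leq 0$, so the family is uniformly bounded above; by Choquet's lemma one extracts a countable subfamily $(\varphi_j)\subset\mathcal{P}$ with $(\sup_j\varphi_j)^{*}=\Phi^{*}=u$. One then invokes the standard stability property of $\theta$-psh functions: the upper semicontinuous regularisation of the supremum of a family of $\theta$-psh functions that is locally uniformly bounded above is again $\theta$-psh. (This reduces, via Choquet's lemma, to the local statement for ordinary psh functions: write $\theta=i\partial\overline{\partial}\psi$ locally with $\psi$ smooth, and pass to the psh functions $\psi+\varphi$.) Hence $u$ is $\theta$-psh, in particular $u\not\equiv-\infty$, and $T_{\min}:=\beta+i\partial\overline{\partial}u$ is a well-defined closed current with $T_{\min}\geq-\gamma$ and $\{T_{\min}\}=\{\beta\}=\alpha$ (since $i\partial\overline{\partial}u$ is exact). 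Thus $T_{\min}\in\alpha[-\gamma]$.

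It remains to check minimality. Let $T\in\alpha[-\gamma]$ have normalised potential $\varphi\in\mathcal{P}$. By construction $\varphi\leq\Phi$, and since $\Phi\leq\Phi^{*}=u$ pointwise, $\varphi\leq u$ everywhere; returning to an arbitrary (non-normalised) potential of $T$ this becomes $\varphi_T\leq\varphi_{T_{\min}}+C$ with $C=\sup_M\varphi_T$. In the terminology of the preceding definition this says precisely that $T_{\min}$ is less singular than $T$, i.e. $T_{\min}\preceq T$; as $T$ was arbitrary, $T_{\min}$ is the smallest element of $(\alpha[-\gamma],\preceq)$. The only genuinely non-formal ingredient is the regularisation lemma just quoted; it is also where the normalisation $\sup_M\varphi=0$ enters, furnishing the uniform upper bound that ensures $\Phi\not\equiv-\infty$.
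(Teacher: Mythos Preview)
The paper does not supply its own proof of this proposition; it merely records the result with a reference to Boucksom \cite{bo}. Your argument is correct and is exactly the standard upper-envelope construction found there: fix a smooth representative, translate the constraint $T\geq-\gamma$ into $\theta$-plurisubharmonicity of the potential, take the regularised supremum over normalised potentials, and invoke the stability of $\theta$-psh functions under that operation.

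One cosmetic point worth flagging: the inequality in the paper's definition of $\preceq$ is visibly misprinted (as written, $\varphi_1\leq\varphi_2+C$ would make $T_1$ \emph{more} singular than $T_2$, in contradiction with the Lelong-number comparison stated immediately afterwards). Your reading---the potential of the less singular current dominates up to a constant---is the intended one and agrees with Boucksom's convention, so your concluding implication $\varphi_T\leq u+C\Rightarrow T_{\min}\preceq T$ is correct in substance even if it conflicts with the literal text of the definition just above.
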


Ce plus petit \'el\'ement est appel\'e courant {\it \`a singularit\'es minimales} (dans la classe $(\alpha[-\gamma])$. Il n'est bien entendu pas unique, cependant, deux courants \`a singularit\'es minimales ont exactement  m\^eme nombre de Lelong en chaque point.

Plus g\'en\'eralement, il r\'esulte directement des d\'efinitions que si $T,T'\in (\alpha[-\gamma],\preceq)$ avec $T$ \`a singularit\'es minimales, on a pour tout $x\in M$

\begin{equation}
\nu (T,x)\leq \nu(T',x)
\end{equation}

Par ailleurs, lorsque $\gamma$ est nulle, on parlera de courant {\it positif \`a singularit\'es minimales}.

\begin{definition}\label{nefcod1} (\cite{bo})
 Soit $\alpha$ une classe pseudo-effective, on dira que $\alpha$ est {\bf nef en codimension 1} ou {\bf nef modifi\'ee}  si, pour une forme de K\"ahler fix\'ee $\theta$, il existe pour tout $\varepsilon>0$, un courant $T_\varepsilon\in\alpha[-\varepsilon\theta]$ lisse en dehors d'un sous-ensemble analytique de codimension $\geq 2$. 
\end{definition}

Cette d\'efinition est \'evidemment ind\'ependante de la métrique consid\'er\'ee.

Suivant (\cite{bo}), on notera $\mathcal M\mathcal N$ le c\^one formé par les classes nefs modifi\'ees.

\begin{definition}\label{famexcep}(\cite{bo})
 Soit $\mathcal A$ une famille finie de $p$ diviseurs premiers distincts: ${\mathcal A}=\{D_1,...,D_p\}$. On dira que cette famille est {\bf exceptionnelle} si le cône convexe engendré par les $\{D_i\}$ n'intersecte le cône nef modifié $\mathcal M\mathcal N$ qu'à l'origine.
\end{definition}

\begin{thm}\label{deczar} (\cite{bo})  Soit $\alpha$ une classe pseudo-effective, il existe alors une unique d\'ecomposition de la  forme 
       \begin{equation}\label{zar}
        \alpha=\{N(\alpha)\}+Z(\alpha)
       \end{equation}
o\`u $Z(\alpha)$ est {\bf nef en codimension 1} et $N(\alpha)$ ({\bf la partie n\'egative}) est un $\RR$ diviseur effectif  dans $Div(M)\otimes \RR$:

       \begin{equation}
        N(\alpha)=\sum_{i=1}^p\lambda_i D_i
       \end{equation}

telle que la famille $\mathcal A=\{D_1,...,D_p\}$ soit exceptionnelle.

Cette décomposition est par ailleurs unique 
et les coefficients $\lambda_i$ sont d\'etermin\'es suivant la formule :

       \begin{equation}\label{deflambdai}
        \lambda_i=sup_{\varepsilon>0}\nu(T_{min,\varepsilon}, D_i)
       \end{equation}
o\`u $T_{min,\varepsilon}$ d\'esigne un courant \`a singularit\'es minimales dans la classe $\alpha[-\varepsilon\theta]$.
\end{thm}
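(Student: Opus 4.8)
The plan is to construct $N(\alpha)$ directly by the formula (\ref{deflambdai}) and then verify that what remains is nef in codimension $1$, that the supporting family is exceptional, and that these two requirements pin down the decomposition uniquely. First I would fix a Kähler form $\theta$ and, for each $\varepsilon>0$, choose a current with minimal singularities $T_{min,\varepsilon}\in\alpha[-\varepsilon\theta]$; recall that such a current exists by the preceding proposition and that its generic Lelong number $\nu(T_{min,\varepsilon},D)$ along a prime divisor $D$ is independent of the particular minimal current chosen. The key monotonicity observation is that as $\varepsilon$ decreases the class $\alpha[-\varepsilon\theta]$ shrinks, so $T_{min,\varepsilon}$ becomes more singular and $\varepsilon\mapsto\nu(T_{min,\varepsilon},D)$ is nonincreasing; hence $\lambda_D:=\sup_{\varepsilon>0}\nu(T_{min,\varepsilon},D)=\lim_{\varepsilon\to 0}\nu(T_{min,\varepsilon},D)$ is well defined in $[0,+\infty)$. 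One then shows that only finitely many $\lambda_D$ are nonzero: since $T_{min,\varepsilon}-\sum_D\nu(T_{min,\varepsilon},D)[D]$ is still a current $\geq-\varepsilon\theta$ in its (cohomology-shifted) class, intersecting with $\theta^{n-1}$ bounds $\sum_D\lambda_D\,\mathrm{vol}_\theta(D)$ by $\alpha\cdot\{\theta\}^{n-1}+o(1)$, so the sum is finite and only finitely many divisors contribute. Set $N(\alpha)=\sum_i\lambda_i D_i$ and $Z(\alpha)=\alpha-\{N(\alpha)\}$.

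Next I would check that $Z(\alpha)$ is pseudo-effective and nef in codimension $1$. Pseudo-effectivity: subtracting from $T_{min,\varepsilon}$ its divisorial Zariski components along the $D_i$ produces, after taking $\varepsilon\to 0$ and extracting a weak limit, a closed positive current in the class $Z(\alpha)$ (here one uses that the Lelong numbers along the $D_i$ converge to $\lambda_i$ and a Siu-type decomposition to split off the $[D_i]$ parts). For nefness in codimension $1$, in the sense of Definition \ref{nefcod1}, one takes the current $T_{min,\varepsilon}-\sum_i\nu(T_{min,\varepsilon},D_i)[D_i]\in Z(\alpha)[-\varepsilon\theta-\,\text{(small correction)}]$: by the very definition of minimal singularities this current can have no divisorial Zariski component of mass bounded below as $\varepsilon\to 0$ along \emph{any} prime divisor other than the $D_i$, and along the $D_i$ we have subtracted exactly the limiting amount; a diagonal argument in $\varepsilon$ then yields, for every $\varepsilon'>0$, a current in $Z(\alpha)[-\varepsilon'\theta]$ whose divisorial part is arbitrarily small, and invoking the characterization of modified nef classes (that they are precisely the pseudo-effective classes with vanishing negative part) concludes that $Z(\alpha)\in\mathcal{MN}$.

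Then I would establish that $\mathcal A=\{D_1,\dots,D_p\}$ is exceptional. Suppose a nonzero effective combination $\sum_i\mu_i D_i$ (with $\mu_i\geq 0$) lies in $\mathcal{MN}$; I would use it to produce, for small $\varepsilon$, a current in $\alpha[-\varepsilon\theta]$ strictly less singular than $T_{min,\varepsilon}$ along some $D_i$—contradicting minimality. Concretely, writing a modified-nef current for $\sum\mu_i D_i$ that is smooth off codimension $2$, one can average it against $Z(\alpha)$'s minimal current to build a competitor in $\alpha[-\varepsilon\theta]$ whose Lelong number along that $D_i$ is smaller than $\lambda_i$ for $\varepsilon$ small, contradicting (\ref{deflambdai}). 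Finally, uniqueness: if $\alpha=\{N'\}+Z'$ is another such decomposition, then $\{N-N'\}=Z'-Z$ with $N-N'$ an $\RR$-divisor supported on an exceptional family and $Z'-Z$ a difference of modified nef classes; a short argument shows a nonzero divisor class of this type cannot be modified nef unless it is zero, forcing $N=N'$ and $Z=Z'$; moreover comparing with (\ref{deflambdai}) shows the $\lambda_i$ are exactly the stated suprema.

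The main obstacle is the nefness-in-codimension-$1$ step: controlling the \emph{non-divisorial} singularities of $T_{min,\varepsilon}-\sum_i\nu(T_{min,\varepsilon},D_i)[D_i]$ uniformly as $\varepsilon\to 0$, so that a single limiting current (or a sequence of currents in $Z(\alpha)[-\varepsilon'\theta]$) genuinely becomes smooth in codimension $1$. This requires care with the regularization of quasi-psh functions and with the interplay between the decreasing family $\varepsilon\mapsto\alpha[-\varepsilon\theta]$ and the stability of minimal singularities; the rest is bookkeeping with Lelong numbers and the cone duality between $\mathcal{MN}$ and exceptional divisorial families.
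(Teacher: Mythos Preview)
The paper does not give its own proof of this theorem: it is quoted verbatim from Boucksom \cite{bo} as background material, with no argument supplied beyond the citation. So there is nothing in the paper to compare your proposal against line by line.

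That said, your sketch is a faithful outline of Boucksom's original argument: define the coefficients $\lambda_D$ as the limits of the generic Lelong numbers of minimal-singularity currents in $\alpha[-\varepsilon\theta]$, show finiteness of the support, check that the remainder is modified nef and that the supporting family is exceptional, and deduce uniqueness from the fact that exceptional cones meet $\mathcal{MN}$ only at the origin. You have correctly identified the delicate point --- showing that $Z(\alpha)$ is genuinely nef in codimension $1$, which in \cite{bo} is handled via the characterization of $\mathcal{MN}$ as exactly the pseudo-effective classes whose minimal Lelong numbers vanish in codimension $1$. One small correction: in your uniqueness paragraph, the argument that a nonzero effective exceptional divisor cannot represent a modified nef class is immediate from Definition~\ref{famexcep} itself (the cone only meets $\mathcal{MN}$ at $0$), so no further work is needed there; and in the exceptionality step, the cleaner route in \cite{bo} is to observe directly that any modified-nef effective combination $\sum\mu_i D_i$ would have zero minimal Lelong numbers along each $D_j$, contradicting the fact (from Proposition~\ref{propri�t�szar}~iii)) that the only positive current in its class is $\sum\mu_i[D_i]$.
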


\begin{definition}\label{dimnum}(\cite{bo})
 La {\bf dimension numérique} $\nu (\alpha)$ de la classe pseudo-effective $\alpha$ est le plus petit entier $n\in\{0,...,Dim\ M\}$ tel que $Z^{n+1}(\alpha)=0$.
\end{definition}

Passons en revue quelques propri\'et\'es fondamentales de cette d\'ecomposition, appel\'ee {\it d\'ecomposition divisorielle de Zariski}.

\begin{prop} (\cite{bo})\label{propriétészar}
 Soit $\alpha$ pseudo-effective et $N(\alpha)=\sum_{i=1}^p\lambda_i D_i$ sa  partie n\'egative; alors

i) La d\'ecomposition divisorielle $\alpha=\{N(\alpha)\}+Z(\alpha)$ co\"\i ncide avec la d\'ecomposition de Zariski classique lorsque $M$ est une surface; en particulier, elle est dans ce cas rationnelle (i.e $N(\alpha)$ est un $\mathbb Q$ diviseur effectif si $\alpha$ est une classe rationnelle).\\

ii) La famille $(\{D_1\},...,\{D_p\})$ est libre dans $H^{1,1}(M,\RR)$. En particulier,  $p$ est au plus \'egal au nombre de Picard $\rho (M)$ de la variété.\\

iii)  Pour tout $i$, on a $\lambda_i\leq \nu (T,D_i)$ quel que soit le courant ferm\'e positif tel que $\{T\}=\alpha$. En particulier, $\sum_{i=1}^p\lambda_i[D_i]$ est le seul courant positif représentant $\{N(\alpha)\}$.

\end{prop}

\begin{rem}
 
On peut avoir $\lambda_i< \nu (T,D_i)$ dans iii) m\^eme lorsque $T$ est à singularit\'es minimales (voir \cite{bo}). Nous verrons cependant que l'\'egalit\'e est atteinte dans la situation qui est la n\^otre.
\end{rem}

Reprenons nos hypoth\`eses initiales: la vari\'et\'e ambiante est k\"ahlerienne compacte et muni d'un feuilletage holomorphe $\mathcal F$ dont le fibr\'e conormal $N_{\mathcal F}^*$ est {\it pseudo-effectif}.\\

D'apr\`es les formules (\ref{etafermé}) et (\ref{Tomega=0}), nous disposons de deux courants positifs $\mathcal F$ invariants, \`a savoir $T$ et $\eta_T$, le premier apparaissant comme la ``forme de courbure''de la m\'etrique transverse d\'efinie par le second (au sens de la relation (\ref{courbure})).

Puisque $\eta_T$ est \`a coefficients localement born\'es, la classe $\{\eta_T\}$ est {\it nef} et pour tout $(1,1)$ courant positif fermé $\Xi$, le produit ext\'erieur

$$\Xi\wedge \eta_T$$ est bien d\'efini en tant que $(2,2)$ courant positif par la formule usuelle (puisque $\xi$ est d'ordre $0$ et $\eta_T$ à coefficients $L_{loc}^\infty$) et coïncide avec $\frac{i}{\pi}\partial\overline{\partial}(\varphi\Xi) $, $\varphi$ un potentiel local de $\eta_T$
   (en utilisant des noyaux régularisants et le fait que  les potentiels locaux $\varphi_\eta$ de $\eta$ sont également  $L_{loc}^\infty$  ).Cette opération est par conséquent compatible avec le cup-produit en cohomologie:
\begin{equation}\label{produit coh}
  \{\Xi\wedge \eta_T\}=\{\Xi\}\{\eta_T\}
\end{equation}

Par ailleurs, on a 
\begin{equation}\label{nulXiwedge eta}
 \Xi\wedge \eta_T=0
\end{equation}

lorsque $\Xi$ est de plus $\mathcal F$ invariant (par exemple, $\Xi=T$).

En particulier,
\begin{equation}\label{nuletawedge eta}
  {\{\eta_T\}}^2=0
\end{equation}

\begin{prop}\label{Z(alpha)nef} Soit $\Xi$ un $(1,1)$ courant positif $\mathcal F$ invariant (par exemple, $\Xi=T$). Consid\'erons la d\'ecomposition de Zariski
 
$$  \alpha =\{N(\alpha)\}+Z(\alpha)$$

avec $\alpha=\{\Xi\}$ (par exemple, $\alpha=c_1(N_{\mathcal F}^*)=\{T\})$;

On a alors les propriétés suivantes.

\par i) Les composantes $D_i$ de la partie négative $N(\alpha)$ sont des séparatrices du feuilletage;  en particulier,  $Z(\alpha)$ peut être représenté par un courant positif $\Xi_0$ également $\mathcal F$ invariant.

\par  ii) $Z(\alpha)$ est proportionnelle \`a $\{\eta_T\}$ 

\par  iii) La  classe $Z(\alpha)$ est {\it nef} et  ${Z(\alpha)}^2=0$

      \par iv) La décomposition est orthogonale: $\{N(\alpha)\} Z(\alpha)=0$.
       \par v) Dans $H^{1,1}(M,\RR)$, le  $\RR$ espace vectoriel engendré par les composantes $\{D_i\}$ de $\{N(\alpha)\}$ n'intersecte la droite vectorielle réelle engendré par $\eta_T$ qu'à l'origine.
       \par vi) La décomposition est  rationnelle si $M$ est projective et $\alpha$ est une classe rationnelle (par exemple, $\alpha=c_1(N_{\mathcal F}^*)$).
       \par vii)Tout  $(1,1)$ courant positif fermé représentant $\alpha$ est nécessairement $\mathcal F$ invariant.
       \par viii) Soit $H$ une hypersurface invariante par le feuilletage et $H_1,...,H_r$ ses composantes irréductibles; la famille $\{H_1,...,H_r\}$ est exceptionnnelle si et seulement si la matrice $(m_{ij})=(\{H_i\}\{H_j\}{\{\theta\}}^{n-2})$ est définie négative, $\theta$ désignant une forme de Kähler fixée sur $M$.
\end{prop}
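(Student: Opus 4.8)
The strategy is to combine the characterization of exceptional families in terms of the negative part of a Zariski decomposition (Théorème \ref{deczar}) with the orthogonality relation item iv), then reduce the problem to a purely linear-algebra statement about the Gram matrix $(m_{ij})$. The key tool throughout is the intersection form $q(\cdot,\cdot)$ on $H^{1,1}(M,\RR)$ defined by $q(a,b)=a\cdot b\cdot\{\theta\}^{n-2}$, which by the Hodge index theorem has signature $(1,\rho(M)-1)$ on the appropriate subspace (this is where Kähler compactness enters).

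First I would prove the ``only if'' direction. Suppose $\{H_1,\dots,H_r\}$ is exceptional. By Proposition \ref{propri�t�szar} ii) the classes $\{H_i\}$ are linearly independent, so the matrix $(m_{ij})$ is the Gram matrix of an honest $r$-dimensional subspace $V\subset H^{1,1}(M,\RR)$ with respect to $q$. I claim $q$ is negative definite on $V$. If not, then since $q$ has at most one positive eigenvalue direction (Hodge index), $q$ restricted to $V$ is either negative semi-definite with a nontrivial kernel, or has exactly one positive direction. In the first case one produces a nonzero class in $V$ which is $q$-isotropic and $q$-orthogonal to all of $V$; using that $V$ is spanned by effective divisors one can show such a class is nef modified (one can run the argument: a class in the closure of the cone generated by the $D_i$ that is in the boundary of where $q$ degenerates has no ``room'' to be subtracted from, hence sits in $\mathcal{M}\mathcal{N}$), contradicting exceptionality of $\mathcal{A}$ via Définition \ref{famexcep}. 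In the second case, the cone generated by the $H_i$ — being a cone of effective classes on which $q$ is not negative definite — meets $\mathcal{M}\mathcal{N}$ nontrivially: indeed a positive-$q$ class that is a nonnegative combination of the $H_i$ is big on the subvariety and, after using item i) of Proposition \ref{Z(alpha)nef} together with Théorème \ref{deczar}, its own negative part must be trivial, so it is itself nef modified. Either way we contradict the hypothesis, so $(m_{ij})$ is definite negative.

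Conversely, suppose $(m_{ij})=(\{H_i\}\{H_j\}\{\theta\}^{n-2})$ is negative definite. I must show the convex cone $\sigma$ generated by $\{H_1\},\dots,\{H_r\}$ meets $\mathcal{M}\mathcal{N}$ only at the origin. Let $\beta=\sum a_i\{H_i\}$ with $a_i\ge 0$, not all zero, and suppose $\beta\in\mathcal{M}\mathcal{N}$. Since $\beta$ is nef modified it is in particular pseudo-effective with trivial negative part $N(\beta)=0$, so $\beta=Z(\beta)$; then by item iv) applied to $\alpha = \beta$ — or more directly, by the fact that a modified nef class $\gamma$ satisfies $\gamma\cdot\gamma\cdot\{\theta\}^{n-2}\ge 0$ (approximate $\gamma$ by currents smooth outside codimension $\ge2$ as in Définition \ref{nefcod1} and pass to the limit, the codimension-2 bad locus not affecting the $(n-2)$-fold wedge with $\theta$) — we get $q(\beta,\beta)\ge 0$. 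But $q(\beta,\beta)=\sum_{i,j}a_i a_j m_{ij}<0$ since $(m_{ij})$ is negative definite and the $a_i$ are not all zero (using again linear independence of the $\{H_i\}$, which here follows from nondegeneracy of $(m_{ij})$). This contradiction shows $\beta=0$, i.e. $\sigma\cap\mathcal{M}\mathcal{N}=\{0\}$, which is exactly exceptionality.

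The main obstacle is the ``only if'' direction, specifically manufacturing an element of $\mathcal{M}\mathcal{N}$ inside $\sigma$ when $q$ fails to be negative definite on $V$: one needs to know that a pseudo-effective class lying in the cone spanned by the $H_i$ and having nonnegative self-intersection actually has empty negative part (so that it is genuinely modified nef and not merely pseudo-effective). This is essentially a relative Zariski-decomposition argument — the negative part of such a class, being supported on the $H_i$, would have to be $q$-negative, which is incompatible with the self-intersection being $\ge 0$ once one writes the Zariski decomposition and uses its orthogonality (item iv)) together with the fact that $Z(\cdot)^2\ge0$. Making this last point fully rigorous, rather than hand-waving ``no room to subtract'', is where the real care is needed; everything else is Hodge index theory plus bookkeeping.
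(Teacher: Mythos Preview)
Your proposal treats only part viii); I will assess that.

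The converse direction (negative definite $\Rightarrow$ exceptional) is fine and matches the paper: any nonzero $\beta=\sum a_i\{H_i\}\in\sigma\cap\mathcal{MN}$ would have $\beta^{2}\{\theta\}^{n-2}\geq 0$ by Lemme~\ref{carrenefmodif>0}, contradicting negative definiteness. The paper phrases this via $Z(\alpha)=\sum\mu_i\{H_i\}$ and the sharper $Z(\alpha)^2=0$ from iii), but your version is equivalent.

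The forward direction is where your approach diverges from the paper and where there is a real gap. Your plan is: if $(m_{ij})$ is not negative definite, locate $\beta\in V$ with $q(\beta,\beta)\geq 0$, push it into the cone $\sigma$, and show $Z(\beta)\neq 0$. The first step is in fact salvageable (since $m_{ij}\geq 0$ for $i\neq j$, replacing the coefficients of $\beta$ by their absolute values only increases $q(\beta,\beta)$), but the last step is circular. Writing $\beta=\{N(\beta)\}+Z(\beta)$ with $N(\beta)$ supported on a subfamily $S\subset\{H_1,\dots,H_r\}$, your argument that ``$q(N(\beta),N(\beta))<0$, hence $q(Z(\beta),Z(\beta))>0$, hence $Z(\beta)\neq 0$'' presupposes the forward direction for the exceptional subfamily $S$. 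An induction on $r$ does not close: in the case $|S|=r$ (i.e.\ $\beta$ has all coefficients positive and $Z(\beta)=0$) you are back to the full statement with no reduction.

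The paper's argument is different and avoids this circularity by exploiting the foliation through the class $\{\eta_T\}$, which you never invoke. Since each $H_i$ is $\mathcal{F}$-invariant, (\ref{produit coh}) and (\ref{nulXiwedge eta}) give $\{H_i\}\{\eta_T\}\{\theta\}^{n-2}=0$, so $W=\mathrm{span}\,\{H_i\}$ lies in the $q$-orthogonal of $\{\eta_T\}$. Now $\{\eta_T\}$ is a nonzero isotropic class (by (\ref{nuletawedge eta})), and a one-line Hodge--Riemann computation shows that the orthogonal of any nonzero isotropic vector in a Lorentzian form is negative semidefinite, with kernel exactly the isotropic line itself. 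Hence $q\leq 0$ on $W$, with equality only along $\RR\{\eta_T\}\cap W$; exceptionality forces this intersection to be $\{0\}$ (if $0\neq\alpha_1\in W^+$ were proportional to the nef class $\{\eta_T\}$ it would lie in $\sigma\cap\mathcal{MN}$), and one passes from $W^+$ to all of $W$ using $m_{ij}\geq 0$ off-diagonally. This is precisely the content of v) and of Lemme~\ref{carrec1not=0}, to which the paper's proof of viii) simply refers. The isotropic transverse class $\{\eta_T\}$ is thus the missing ingredient that turns the forward implication into a clean signature argument.
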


\begin{rem}
La propriété $\nu(c_1(N_{\mathcal F}^*))\leq 1$ énoncée dans iii) n'est rien d'autre qu'une version numérique du théorème de Bogomolov-Castelnuovo-De Franchis:
\end{rem}

\begin{thm} (\cite{Reid}) Soit $M$ une variété projective et $\mathcal F$ un feuilletage de codimension $1$;
alors $\kappa(N_{\mathcal F}^*)\leq 1$  ($\kappa$ désignant la dimension de Kodaira) et  en cas d'égalité, $\mathcal F$ est une fibration au-dessus d'une courbe .
\end{thm}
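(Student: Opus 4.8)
The plan is to reduce everything to the divisorial Zariski decomposition of $\alpha=c_1(N_{\mathcal F}^*)$ furnished by Proposition \ref{Z(alpha)nef}. The statement is empty when $\kappa(N_{\mathcal F}^*)=-\infty$, so I may assume $\kappa(N_{\mathcal F}^*)\geq 0$; then some power $(N_{\mathcal F}^*)^{\otimes m}$ carries a nonzero section, hence is effective and $\alpha$ is pseudo-effective. Since $M$ is projective (in particular kählerienne), we are exactly in the hypotheses of the whole paper, and Proposition \ref{Z(alpha)nef} applies to $\alpha$, giving a decomposition $\alpha=\{N(\alpha)\}+Z(\alpha)$ with $Z(\alpha)$ nef, $Z(\alpha)^2=0$, rational, and proportional to $\{\eta_T\}$.

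For the bound $\kappa\leq 1$ I would invoke the general inequality $\kappa(L)\leq\nu(c_1(L))$ between the Kodaira dimension of a pseudo-effective line bundle and the numerical dimension of its first Chern class. By item iii) of Proposition \ref{Z(alpha)nef} one has $Z(\alpha)^2=0$, so by Definition \ref{dimnum} the numerical dimension satisfies $\nu(\alpha)\leq 1$; hence $\kappa(N_{\mathcal F}^*)\leq\nu(\alpha)\leq 1$. This is precisely the effective refinement of the numerical statement $\nu(\alpha)\leq 1$ mentioned in the preceding remark. (Equivalently, $\kappa\leq 1$ is the Bogomolov--Sommese vanishing theorem applied to the line subsheaf $N_{\mathcal F}^*\subset\Omega^1_M$.)

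It remains to treat the equality case $\kappa(N_{\mathcal F}^*)=1$. The negative part $N(\alpha)$ is a fixed component of every section of every power $(N_{\mathcal F}^*)^{\otimes m}$, so $\kappa(Z(\alpha))=\kappa(N_{\mathcal F}^*)=1$, while $Z(\alpha)$ is nef, rational (item vi) and satisfies $Z(\alpha)^2=0$. The key external input is that such a class is semiample: a nef rational class of numerical dimension one and Kodaira dimension one is the pull-back of an ample class under a fibration (for large $m$ the system $|mZ(\alpha)|$ is composed with a pencil). I thus obtain a fibration $\Phi:M\to C$ onto a smooth curve whose general fibre $F$ satisfies $\{F\}^2=0$ and $\{F\}$ proportional to $Z(\alpha)$, hence, by item ii), proportional to $\{\eta_T\}$. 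The foliation is then identified with $\Phi$ by a single positivity computation: by (\ref{eta}) the current $\eta_T=\frac{i}{\pi}e^{2\varphi}\,\omega\wedge\overline{\omega}$ is positive with locally bounded coefficients, so for a general fibre $F$ the product $\eta_T\wedge[F]$ is well defined and computes the cohomological pairing,
\[
\int_F\eta_T=\{\eta_T\}\cdot\{F\}=c\,\{F\}^2=0 .
\]
Since $\eta_T\geq 0$, the restriction $\eta_T|_F=\frac{i}{\pi}e^{2\varphi}\,(\omega|_F)\wedge\overline{(\omega|_F)}$ vanishes almost everywhere on $F$; as $e^{2\varphi}>0$ this forces $\omega|_F\equiv 0$, i.e. $F$ is $\mathcal F$-invariant. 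Being of dimension $n-1$, the general fibre is then a leaf of $\mathcal F$, so $\mathcal F$ coincides with the foliation defined by $\Phi$, which is the desired fibration over the curve.

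The main obstacle is the semiampleness step in the equality case: one must upgrade the purely numerical datum $Z(\alpha)^2=0$ into a genuine fibration, which is exactly where the hypothesis $\kappa=1$ (actual sections, i.e. algebraic integrability) is used rather than the weaker $\nu=1$. A secondary point requiring care is the choice of $F$ general enough to avoid $\mathrm{Sing}\,\mathcal F$ and the polar locus of the potential $\varphi$, so that the identity $\int_F\eta_T=\{\eta_T\}\cdot\{F\}$ is legitimate; the local boundedness of $\eta_T$ is precisely what renders this last step unconditional.
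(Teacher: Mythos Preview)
The paper does not prove this theorem: it is quoted with the attribution \cite{Reid} as the classical Bogomolov--Castelnuovo--De Franchis theorem, and the surrounding remark only observes that item iii) of Proposition~\ref{Z(alpha)nef} (namely $Z(\alpha)^2=0$) is its numerical analogue. So there is no ``paper's own proof'' to compare against; your proposal is an independent argument built out of the paper's machinery.

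As such it is essentially sound, and it is worth noting what it buys: you recover the classical statement from Proposition~\ref{Z(alpha)nef} plus two standard external inputs (the inequality $\kappa\le\nu$ and the Iitaka fibration in Kodaira dimension one), whereas the reference \cite{Reid} proceeds via Bogomolov's instability argument without any transverse metric. Your route makes transparent why $\kappa\le 1$ is really the numerical bound $\nu\le 1$ in disguise, and the identification of the fibration with $\mathcal F$ via $\eta_T$ is clean.

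One genuine slip to fix: your final computation $\int_F\eta_T=\{\eta_T\}\cdot\{F\}=c\{F\}^2=0$ is only well typed when $n=2$. In general $\eta_T\wedge[F]$ is a $(2,2)$-current and $\{\eta_T\}\cdot\{F\}\in H^{2,2}$ is not a number. You should instead compute
\[
\int_F \eta_T\wedge\theta^{\,n-2}
\;=\;\{\eta_T\}\cdot\{F\}\cdot\{\theta\}^{n-2}
\;=\;c\,\{F\}^2\cdot\{\theta\}^{n-2}\;=\;0,
\]
the last equality because two general fibres of $\Phi$ are disjoint. Positivity of $\eta_T$ and of $\theta$ then forces $\eta_T|_F=0$, hence $\omega|_F=0$ as you say. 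A secondary point: you should also justify that the Iitaka map for $Z(\alpha)$ is an actual \emph{morphism} to the curve (not merely rational); this is where nefness of $Z(\alpha)$ and $Z(\alpha)^2=0$ are used, or alternatively one invokes that any rational map from a normal variety to a smooth curve with connected general fibre extends to a morphism.
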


Signalons qu'il existe des feuilletages à conormal pseudo-effectif tel que $\kappa(N_{\mathcal F}^*)=-\infty$ dont les prototypes sont les feuilletages minimaux (c'est-à-dire à feuilles denses) sur certaines surfaces de type général uniformisées par le bidisque (voir \cite{brsurf}).

De façon générale, on verra de plus (section \ref{dyn}) que le comportement dynamique du feuilletage est (du moins en dimension numérique 1) fidèlement reflété par les propriétés algébriques du conormal; à savoir que $\mathcal F$ sera minimal (en un certain sens) si et seulement si $\kappa(N_{\mathcal F}^*)< 1$; {\it a contrario} le feuilletage sera une fibration si et seulement si le principe d'abondance est vérifié, ce qui n'est d'ailleurs qu'une reformulation du théorème précédent.
\bigskip

On peut d\'emontrer la proposition \ref{Z(alpha)nef} via le th\'eor\`eme de la signature de Hodge-Riemann dans un esprit similaire \`a ce qui est fait dans \cite{ds}.

Considérons à cet effet la forme bilinéaire symétrique $q$ définie sur $H^{1,1}(M,\mathbb R)$ par

  $$q(c,c')=-\int_M c\wedge c'\wedge{\{\theta\}}^{n-2},$$

$\theta$ étant une forme de Kähler fixée sur $M$.

\begin{thm} (Hodge-Riemann)

Notons $\mathcal P$ l'ensemble des classes primitives dans $H^{1,1}(M,\mathbb R)$:

$$\mathcal P:=\{c\in H^{1,1}(M,\mathbb R)|c\wedge {\{\theta\}}^{n-1}=0\}.$$

Alors la forme $q$ est définie positive sur l'hyperplan $\mathcal P$.
\end{thm}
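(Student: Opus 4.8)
\medskip
\noindent\textbf{Esquisse de preuve propos\'ee.} Le plan est de ramener cet \'enonc\'e global aux relations bilin\'eaires de Hodge-Riemann \emph{ponctuelles} (alg\`ebre ext\'erieure d'un espace hermitien), via la th\'eorie de Hodge sur la vari\'et\'e k\"ahlerienne compacte et connexe $(M,\theta)$. Notons $L=\theta\wedge\cdot$ l'op\'erateur de Lefschetz et $\Lambda$ son adjoint. La premi\`ere \'etape consiste \`a identifier la condition de primitivit\'e \emph{cohomologique} d\'efinissant $\mathcal P$ avec la primitivit\'e \emph{ponctuelle} du repr\'esentant harmonique. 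Toute classe $c\in H^{1,1}(M,\RR)$ poss\`ede un unique repr\'esentant harmonique $\alpha$, qui est une $(1,1)$ forme r\'eelle; gr\^ace aux identit\'es k\"ahleriennes ($[\Delta,L]=[\Delta,\Lambda]=0$), la d\'ecomposition de Lefschetz ponctuelle de $\alpha$ s'\'ecrit $\alpha=\alpha_0+\lambda\,\theta$ o\`u $\alpha_0$ est harmonique primitive ($\Lambda\alpha_0=0$ en tout point) et $\lambda\in\RR$. Puisqu'une $k$ forme primitive est annul\'ee par $L^{n-k+1}$ (ici $k=2$), on a $L^{n-1}\alpha_0=0$, tandis que $L^{n-1}\theta=\theta^n$; en int\'egrant, $\int_M c\wedge\{\theta\}^{n-1}=\lambda\int_M\theta^n$. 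La condition $c\in\mathcal P$ \'equivaut donc \`a $\lambda=0$, c'est-\`a-dire \`a la primitivit\'e ponctuelle de $\alpha$; on note au passage que $\mathcal P$ est bien un hyperplan, noyau de la forme lin\'eaire non triviale $c\mapsto\int_M c\wedge\{\theta\}^{n-1}$.

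La deuxi\`eme \'etape est le calcul lui-m\^eme. Pour $c\in\mathcal P$ de repr\'esentant harmonique $\alpha$ (donc primitif), la formule de Weil en bidegr\'e $(1,1)$ donne $\ast\,\alpha=-\frac{1}{(n-2)!}\,\theta^{n-2}\wedge\alpha$; comme $\alpha$ est r\'eelle, $\bar\alpha=\alpha$ et
$$0\leq\|\alpha\|_{L^2}^2=\int_M\alpha\wedge\ast\bar\alpha=-\frac{1}{(n-2)!}\int_M\alpha\wedge\alpha\wedge\theta^{n-2}=\frac{1}{(n-2)!}\,q(c,c),$$
avec \'egalit\'e si et seulement si $\alpha=0$, i.e. $c=0$. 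Cela montre que $q$ est d\'efinie positive sur $\mathcal P$.

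L'obstacle principal n'est pas conceptuel : il s'agit surtout de mener avec soin les v\'erifications de conventions, \`a savoir que la primitivit\'e cohomologique co\"incide bien avec celle du repr\'esentant harmonique (ce qui repose sur les identit\'es k\"ahleriennes et la compatibilit\'e de $\Lambda$, $L$ avec le laplacien) et que la normalisation de la formule de Weil (signes sur $\ast$, puissances de $i$, facteur $(n-2)!$) est correcte pour le bidegr\'e $(1,1)$ consid\'er\'e; le reste est de la th\'eorie de Hodge standard sur les vari\'et\'es k\"ahleriennes compactes. Une alternative serait, en dimension $2$, d'invoquer directement le th\'eor\`eme de l'indice de Hodge, puis de traiter le cas g\'en\'eral par sections hyperplanes \`a la Lefschetz, mais l'argument ponctuel pr\'ec\'edent est plus direct et s'applique uniform\'ement.
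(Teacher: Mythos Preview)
Your argument is correct and is precisely the standard proof of the Hodge--Riemann bilinear relations in bidegree $(1,1)$: passage to the harmonic representative, identification of cohomological primitivity with pointwise primitivity via the K\"ahler identities, and the Weil formula $\ast\alpha=-\tfrac{1}{(n-2)!}\,\theta^{n-2}\wedge\alpha$ for a real primitive $(1,1)$-form, yielding $\|\alpha\|_{L^2}^2=\tfrac{1}{(n-2)!}\,q(c,c)$. The sign and factorial checks you flag are indeed the only delicate points, and you have them right.

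As for the comparison with the paper: there is nothing to compare. The paper states this theorem as a classical result (attributed to Hodge--Riemann) and uses it as a black box in the proof of Proposition~\ref{Z(alpha)nef}; it does not supply a proof of its own. Your sketch is therefore not an alternative approach but simply the textbook justification of a quoted result.
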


Afin de détailler la preuve de la proposition précédente, voici d'abord une remarque préliminaire:

.


\begin{lemme}\label{carrenefmodif>0} Soit $M$ Kähler compacte et $\beta\in H^{1,1}(M,\mathbb R)$ une classe nef modifiée, alors $q(\beta,\beta)\leq 0$.
\end{lemme}

\begin{proof} Par définition, il existe pour  $\varepsilon>0$ arbitrairement petit un $(1,1)$ courant positif fermé $T_\varepsilon$ lisse en dehors d'un ensemble analytique de codimension 2 et tel que $\{T_\varepsilon\}\in \beta +\varepsilon\{\theta\}$. Suivant \cite{de}, il en résulte que $T_\varepsilon\wedge T_\varepsilon$ est bien défini en tant que $(2,2)$ courant {\it positif} fermé et que ${\{T_\varepsilon\}}^2=\{T_\varepsilon\wedge T_\varepsilon\}$. Le lemme s'obtient alors par passage à la limite.
\end{proof}
\vskip 20 pt
{\bf Preuve de la proposition \ref{Z(alpha)nef}} 
\vskip 5 pt
i): soit 

$$\sum\nu(T,D)[D] + R$$
la décomposition de {\it Siu} du courant $\Xi$. Puisque $\Xi$ est $\mathcal F$ invariant, on obtient que chaque diviseur premier $D$ tel que $\nu (T,D)\not=0$ est une s\'eparatrice du feuilletage (de façon équivalente, le courant d'intégration $[D]$ est $\mathcal F$ invariant).

Soit $$\alpha=\{N(\alpha)\}+Z(\alpha)$$ la décomposition divisorielle; compte tenu de la propriété iii) de la proposition \ref{propriétészar}, chaque $D_i$ est nécessairement $\mathcal F$ invariant. Par suite, le courant positif $\Xi-\sum \lambda_i [D_i]$ représentant $Z(\alpha)$ est également $\mathcal F$ invariant.

\par ii) : d'après i), le lemme \ref{carrenefmodif>0} et les formules (\ref{produit coh}) et (\ref{nulXiwedge eta}) ,la forme symétrique $q$ est semi-négative en restriction au $\mathbb R$ espace vectoriel $V$ engendré par $Z(\alpha)$ et $\{\eta_T\}$;  suivant Hodge-Riemann, on a donc nécessairement dim $V=1$.

\par iii): conséquence  du point i), de (\ref{nuletawedge eta}) et le fait déjà observé que $\{\eta_T\}$ est {\it nef}.

\par iv): conséquence de (\ref{produit coh}) et (\ref{nulXiwedge eta}).

\par v):  
soit $W$ (resp. $W^+$) l'espace vectoriel (resp. le cône) engendré par les $\{D_i\}$. En exploitant le fait déjà établi que ${\{\eta_T\}}^2=0$, que  $\{\eta_T\}\in W^\perp$ pour la forme bilinéaire $q$ et la propriété {\it iv)} de la proposition \ref{propriétészar}, on obtient par le théorème de la signature que $q(\alpha_1,\alpha_1)>0$ si $\alpha_1\in W^+\setminus\{0\}$ . Par ailleurs, on a
$q(\{D_i\},\{D_j\})<0$ si $i\not= j$, en conséquence de quoi on a $q(\alpha_2,\alpha_2)>0$ si $\alpha_2\in W\setminus\{0\}$. En particulier, $\alpha_2$ n'est pas colinéaire à $\{\eta_T\}$.\\ 

On peut reformuler ce résultat en disant que la projection de $W$ sur $\mathcal P$ parallèlement à la droite dirigée par $\{\eta_T\}$ n'est pas dégénérée, ce qui nous conduit au

\begin{lemme}\label{carrec1not=0} La forme $q$ est définie positive sur $W$.  En particulier, $\alpha^2$ n'est pas nulle (et plus précisément $q(\alpha,\alpha)>0$) lorsque $N(\alpha)$ n'est pas triviale.

\end{lemme}

\par vi): Supposons maintenant que $M$ est projective; on peut choisir la forme de Kähler $\theta$ de telle sorte que la forme $q$ soit rationelle, i.e: à valeurs rationnelles sur $E_\QQ=NS(M)\otimes\QQ$ où l'on note $NS(M)$ le groupe de Néron-Séveri de la variété. Soit $V_\QQ$ l'espace vectoriel engendré par les $\{D_i\}$.  Compte tenu des propriétés ii)...v) et du lemme précédent, $\{N(\alpha)\}$ doit coïncider avec la projection de $\alpha$ parallèllement à ${V_\QQ}^\perp$, en particulier $\lambda_i\in\QQ$ pour tout $i$.
\par vii):résulte directement de (\ref{integ}) lorsque $\alpha= c_1({N_{\mathcal F}}^*)$.   

Plus généralement, soit $S$ un $(1,1)$ courant positif fermé tel que $\{S\}=\alpha$. D'après (\ref{deflambdai}) et le point i) de la proposition, le courant $S_0:= S-N(\alpha)$ est positif et il suffit de montrer que $S_0$ est $\mathcal F$ invariant. Localement, on peut écrire $$S_0= \frac{i}{\pi}\partial\overline{\partial}\varphi$$
avec $\varphi$ {\it psh}. Soit $N:=N(c_1({N_{\mathcal F}}^*))$ et $Z:=Z(c_1({N_{\mathcal F}}^*))$.
Traitons d'abord le cas $Z=0$;  en utilisant (\ref{produit coh}) et (\ref{nulXiwedge eta}), on constate que le $(2,2)$ courant positif fermé $\eta_T\wedge S_0$ est identiquement nul.
 Par ailleurs, on a  localement, $\eta_T=i{|f|}^\delta\omega\wedge\overline{\omega}$
où $f$ est une équation locale de l'hypersurface $H=\vert N\vert$, $\delta$ un réel positif et $\omega$ un générateur de ${N_{\mathcal F}}^*$. Ceci montre que $S_0$ est $\mathcal F$ invariant en dehors de $H$, donc sur $M$ puisqu'on a vu que $H$ est invariante par le feuilletage.

Supposons maintenant $Z\not=0$;   il suffit de montrer (i) prop 2.8)  que le$(1,1)$ courant positif $S_0-[N(\alpha)]$(qui représente donc $Z(\alpha)$) est invariant par le feuilletage et on peut donc supposer $Z(\alpha)\not =0$. En utilisant le point ii) de la même proposition, il existe alors $\lambda >0$ tel que $Z=\lambda Z(\alpha)$. Le courant  $[N]+ \lambda (S_0-[N(\alpha)])$ représente donc  $c_1({N_{\mathcal F} }^*)$, d'où l'invariance de $S_0$.

viii): posons $\alpha=\sum_{i=1}^r\{H_i\}$ et supposons la famille exceptionnelle, ce qui revient à dire que $\alpha=\{N(\alpha)\}$. La négativité de $(m_{ij})$ résulte alors de (\ref{produit coh}), (\ref{nulXiwedge eta}), (\ref{nuletawedge eta}) combinés avec le point v) de la proposition précédente et le théorème de Hodge-Riemann.

Réciproquement, supposons $(m_{ij})$ définie négative; la partie positive de $\alpha$ est alors de la forme $Z(\alpha\}= \sum_{i=1}^r\mu_i\{H_i\}$ avec $0\leq\mu_i\leq 1$. D'après iii) de la proposition \ref{Z(alpha)nef} et compte tenu des hypothèses faites sur $(m_{ij}$ ceci n'est possible que si
$\mu_i=0$ pour tout $i$ et implique bien que la famille est exceptionnelle.  \qed
\begin{rem}
Nous pensons que la propriété iv) persiste sans hypothèses de projectivité mais nous ne connaissons pas de preuves.

Notons qu'en dehors du contexte des  feuilletages, la rationalité de la décomposition n'est pas toujours vraie (\cite{cu}).

Il est également naturel de conjecturer que le support de la partie négative $N(c_1({N_\mathcal F}^*)$ est une hypersurface contractible.

\end{rem}

\begin{prop} Soit $\alpha =c_1({N_{\mathcal F}}^*)$. Supposons que le feuilletage admette en tout point singulier  un germe réduit d'intégrale première holomorphe; alors $N(\alpha)$ est triviale.

 \end{prop}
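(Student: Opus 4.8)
We want to show that if $\mathcal F$ has a reduced holomorphic first integral at every singular point, then $N(\alpha) = 0$ for $\alpha = c_1(N_{\mathcal F}^*)$. By Proposition 2.8 i), the components $D_i$ of $N(\alpha)$ are separatrices of $\mathcal F$, and by Proposition 2.8 iii), $[N(\alpha)] = \sum \lambda_i [D_i]$ is the unique positive current representing $\{N(\alpha)\}$, with $\lambda_i = \nu(T, D_i)$ for the curvature current $T$ (using the remark that equality is attained here). So I need to understand why the Lelong numbers of $T$ along invariant hypersurfaces must vanish under the local first-integral hypothesis. The key is the local structure equation $d\omega = -\partial\varphi \wedge \omega$ from (1.3): near a point of a reduced invariant hypersurface $\{f=0\}$, write $\omega = g\,\frac{df}{f} + (\text{holomorphic})$... actually better: since $D_i$ is a separatrix, locally $\omega \wedge df = f \cdot (\text{holomorphic } 2\text{-form})$, and I want to extract the residue.

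**The plan.** First I would work near a smooth point of a component $D_i$ of the negative part, away from $\mathrm{Sing}\,\mathcal F$ and from the other components. Here $\mathcal F$ is regular, so $\omega = u\,dz_1$ in suitable coordinates with $D_i = \{z_1 = 0\}$ and $u$ holomorphic invertible; then $\eta_T = \frac{i}{\pi} e^{2\varphi}|u|^2\, dz_1 \wedge d\bar z_1$ has locally bounded coefficients by construction, and $T = \frac{i}{\pi}\partial\bar\partial\varphi$. From the relation that $e^{2\varphi}|u|^2$ is constant along leaves (established in the discussion after (1.3)), $\varphi = -\log|u| + (\text{function of }z_1\text{ only, pluriharmonic in the transverse variable... })$ — more precisely $e^{2\varphi+2\log|u|}$ depends only on the transverse coordinate, so $\varphi + \log|u| = \psi(z_1)$ with $\psi$ subharmonic in one variable; hence the Lelong number $\nu(T, D_i) = \nu(\frac{i}{\pi}\partial\bar\partial\psi, 0)$, the Lelong number of a subharmonic function of one variable at $0$. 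This is the generic Lelong number along $D_i$, so $\lambda_i$ equals this number.

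**The crux.** The main point is then to rule out $\psi$ having a positive Lelong number, i.e. $\psi \sim \lambda_i \log|z_1|$ near $0$, using the behavior near $\mathrm{Sing}\,\mathcal F \cap D_i$ and the reduced first integral hypothesis. At a singular point $p$ lying on $D_i$, there is a reduced holomorphic first integral $h = \prod f_j^{m_j}$ with all $m_j = 1$ (reduced), whose zero divisor is the union of the local separatrices through $p$, and $\omega$ is proportional to $dh/h$ times a unit — in fact $\omega = (\text{unit}) \cdot h \cdot d(\log h) = (\text{unit})\sum_j \widehat{f_j}\,df_j$ where $\widehat{f_j} = \prod_{k\ne j} f_k$. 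The logarithmic derivative $d\omega/\omega$ then has only simple poles along each $\{f_j = 0\}$ with residue exactly $1$, matching the structure $d\omega = -\partial\varphi\wedge\omega$: this forces $\varphi + \log|\text{unit}| + \sum \log|f_j|$ to be pluriharmonic near $p$, hence $\varphi$ has a logarithmic pole of coefficient exactly $1$ along each local branch — but "coefficient $1$" is forced and cannot be strictly positive beyond what the separatrix contributes, OR more to the point: the generic Lelong number $\nu(T,D_i)$ computed at a smooth point must equal the contribution forced at $p$, which the reduced condition pins to be... I expect the actual argument is that $T$ (or rather $\eta_T$ via its local form $|f|^\delta \omega\wedge\bar\omega$ appearing in the proof of vii)) must have $\delta$ matching the residue, and reducedness forces $\lambda_i = \nu(T,D_i) = 0$ because a genuine pole of $\varphi$ along $D_i$ would make $h$ (the first integral) ramified, contradicting reducedness. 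So the strategy: show that $\lambda_i > 0$ would produce, via $\eta_T = i|f|^{\delta}\omega\wedge\bar\omega$ with $\delta = 2\lambda_i > 0$ (from the proof of Prop 2.8 vii)), a first integral with non-reduced ramification along $D_i$ near any singular point of $\mathcal F$ on $D_i$ — contradicting the hypothesis.

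**Main obstacle.** The hard part will be the bookkeeping at singular points: relating the global exponent $\lambda_i$ (a cohomological/Lelong-number datum) to the local ramification exponent of the assumed first integral, and making sure every component $D_i$ actually meets $\mathrm{Sing}\,\mathcal F$ in the relevant way (a separatrix component that is smooth and disjoint from $\mathrm{Sing}\,\mathcal F$ would be a compact leaf, and there one argues directly that $\nu(T,D_i)=0$ since $\varphi$ restricted transversally extends across — or use that such a $D_i$ would have $\{D_i\}$ nef, contradicting it being in an exceptional family unless $\lambda_i=0$). I would organize the proof as: (1) reduce to showing $\nu(T,D_i)=0$ for each $i$; (2) express $\nu(T,D_i)$ as a one-variable Lelong number of the transverse potential $\psi$ at a generic point of $D_i$; (3) at a singular point of $\mathcal F$ on $D_i$ — which exists, else handle the compact-leaf case separately — use the reduced first integral to write $\omega$ explicitly and deduce that the transverse potential of $\eta_T$ extends as a bounded nonvanishing function across $D_i$, forcing $\delta = 0$ and hence $\lambda_i = 0$; (4) conclude $N(\alpha) = 0$.
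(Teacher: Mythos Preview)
Your approach diverges from the paper's, and step~(3) has a genuine gap. Near a singular point $p\in D_i$ with reduced holomorphic first integral $h$, one has $\omega=v\,dh$ with $v$ a holomorphic unit; then $d\omega=\frac{dv}{v}\wedge\omega$ and the ``connection form'' is \emph{holomorphic}, not logarithmic with residue~$1$ along the separatrices as you assert. Equation~(\ref{integ}) then yields $\partial(\varphi+\log|v|^2)\wedge\omega=0$, so $e^{2\varphi}|v|^2=F(h)$ for some function $F$ of one variable; but nothing prevents $F(w)\sim|w|^{2\lambda_i}$ with $\lambda_i>0$, since the boundedness of $\eta_T$ only bounds $F$ from above. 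The reduced first integral constrains the holomorphic structure of $\mathcal F$, whereas $\lambda_i$ is a datum of the metric weight $\varphi$, and your local analysis never connects the two. (Your compact-leaf fallback is also unjustified: there is no reason a compact leaf should have nef class; restricting $N_{\mathcal F}^*$ to such a leaf $L$ only gives $\{L\}^2\{\theta\}^{n-2}\le 0$, which is perfectly compatible with $L$ belonging to an exceptional family.)

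The paper's proof is global and cohomological. The hypothesis allows one to choose the local generators $\omega_U$ of $N_{\mathcal F}^*$ to be \emph{closed} holomorphic $1$-forms; the transition cocycle $g_{UV}$ is then constant along the leaves, so $\frac{dg_{UV}}{g_{UV}}$ is a $1$-cocycle with values in $N_{\mathcal F}^*$. Trivializing it in $C^\infty$ as $h_U\omega_U-h_V\omega_V$ produces a global smooth $2$-form $\Omega=dh_U\wedge\omega_U$ representing (a multiple of) $\alpha$, and $\Omega\wedge\Omega=0$ since $\omega_U\wedge\omega_U=0$. Hence $\alpha^2=0$ in cohomology, and Lemma~\ref{carrec1not=0} then forces $N(\alpha)=0$. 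The missing idea in your proposal is precisely this passage through a global identity $\alpha^2=0$ combined with the Hodge index theorem.
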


\begin{proof} En effet, dans cette situation, les générateurs locaux du conormal peuvent être donnés par des formes holomorphes {\it fermées} qui se recollent donc suivant un cocycle multiplicatif $\{g_{UV}\}$ qui représente le fibré $N_\mathcal F$ et qui est {\it localement constant} sur les feuilles. D'un point de vue différentiable, la cohomologie de ce fibré est triviale à partir du rang 1; on peut donc trouver, pour tout couple d'ouverts $(U,V)$ du recouvrement , $h_U\in{\mathcal C}^\infty (U), h_V\in{\mathcal C}^\infty (V)$ tel que sur $U\cap V$, on ait

$$\frac{dg_{UV}}{g_{UV}}=h_U\omega_U-h_V\omega_V$$

\noindent avec $\omega_U$, $\omega_V$ des formes holomorphes {\it fermées}  définissant le feuilletage sur les ouverts considérés; sur chaque intersection, on a donc 
$$dh_U\wedge\omega_U=dh_V\wedge\omega_V=\Omega.$$
 La  $2$ forme différentielle $\Omega$ représente (au sens de de Rham et à un facteur près) la classe de Chern $\alpha$. Par construction, $\Omega\wedge \Omega=0$ et par suite $\alpha^2=0$, on conclut par le lemme \ref{carrec1not=0}.

\end{proof}

\section{Construction d'une métrique ``canonique'' sur l'espace des feuilles}\label{metsurespfeuilles}

 Soit $$\alpha=\{N\}+Z$$ ($N=\sum_{i=1}^p\lambda_iD_i=N(\alpha), Z=Z(\alpha)$) la décomposition de Zariski de la classe $$\alpha=c_1({N_\mathcal F}^*).$$ 

 On a précédemment établi que 

-$Z$ est en fait {\it nef},

-tout courant $(1,1)$ positif représentant $N, Z$ ou $\alpha$ est $\mathcal F$ invariant,

-à tout $(1,1)$ courant positif $T$ tel que $\{T\}=\alpha$ est canoniquement associée (modulo multiplication par un scalaire $>0$) une $(1,1)$ forme positive fermée $\eta_T$  (formule (\ref{eta}) et de plus, $Z$ est proportionnelle à $\{\eta_T\}$. On adoptera donc par la suite la normalisation

$$\eta_T=Z$$
lorsque $Z\not=0.$

\begin   {THM}  \label{thme princ} Il existe  un courant positif $T$ de bidegr\'e $(1,1)$ invariant par $\mathcal F$ tel que $\{T\}=\alpha$ et 
 \begin{equation}
  T=[N]+\varepsilon\eta_T\label{equationcourbure}
 \end{equation}
où $\varepsilon=0$ si $Z=0$ et  $\varepsilon=1$ sinon.

Ce courant $T$ est par ailleurs unique.
\end   {THM}

\begin{rem}\label{integprem}

En dehors  de l'hypersurface $H=$ supp $N$, l'égalité (\ref{equationcourbure}) indique, par des résultats classiques d'ellipticité,  que $T$ est lisse. Il est donc en particulier à singularités minimales.

Par ailleurs, en un point {\it régulier} $m$ du feuilletage  appartenant à $M\setminus H$, les feuilles de $\mathcal F$  sont données par les niveaux d'une submersion $z$ et il existe donc une fonction lisse sous-harmonique  $\varphi$ ne dépendant que de $z$ telle qu'au voisinage de $m$,

$$T=\frac{i}{\pi}\partial\overline{\partial}\varphi\ et\ \frac{{\partial}^2\varphi}{\partial z\partial\overline{z}}=\varepsilon e^{2\varphi}$$

ce qui munit l'epace local des feuilles d'une métrique de courbure nulle ou $-1$ (modulo normalisation) suivant que $\varepsilon=0$ ou $1$.

Dans la terminologie des feuilletages, $\mathcal F$ est donc soit {\bf transversalement euclidien}, soit {\bf transversalement hyperbolique} sur $M\setminus(H\cup Sing(\mathcal F))$ (cet aspect sera détaillé dans la section \ref{unifo}). 
\end{rem}

\begin{proof} Elle est immédiate si $Z(\alpha)=0$; nous supposerons donc dorénavant cette classe {\it non triviale}.

 Commençons par établir l'unicité. Soient $$T_1=\frac{i}{\pi}\partial\overline{\partial}\varphi_1\ et\ T_2=\frac{i}{\pi}\partial\overline{\partial}\varphi_2$$ deux courants  satisfaisant aux conclusions du théorème \ref{thme princ}. On peut choisir les potentiels locaux {\it psh} $\varphi_i$ de telle sorte que 

$$ u=\varphi_2-\varphi_1$$ soit une fonction bien d\'efinie sur la vari\'et\'e $M$ et 

$$\eta_{T_2}=\frac{i}{\pi}e^{2\varphi_2}\omega\wedge\overline{\omega}=e^{2u}\eta_{T_1}.$$

Pla\c cons nous {\it au voisinage} d'un point $m$ de $M$. 

Soient $\{f_1=0\},....,{f_p=0}\}$ des \'equations r\'eduites respectives de $D_1,...,D_p$.

Les fonctions {\it psh} $\psi_k=\varphi_k-\sum_{i=1}^p\lambda_i\log (|f_i|),\ k=1,2$ satisfont l'EDP

      $$\partial\overline{\partial}\psi_k=\prod_{i=1}^p{|f_i|}^{2\lambda_i}e^{2\psi_k}\omega\wedge\overline{\omega}$$
et sont donc en particulier continues (en fait ${\mathcal C}^{2-\varepsilon}$). Par suite, $u=\psi_2-\psi_1$ est continue.

Elle satisfait de plus

$$\partial\overline{\partial}u=(e^{2{(\varphi_1+u})}-e^{2{\varphi_1}})\omega\wedge\overline{\omega}$$

En particulier, $u$ est {\it psh} sur l'ouvert $\{u>0\}$ qui est donc n\'ecessairement vide par le principe du maximum. Comme $e^{\varphi_1}\omega\wedge\overline{\omega}$ et $e^{\varphi_2}\omega\wedge\overline{\omega}$ sont cohomologues, on en déduit que $u=0$.

Reste à prouver l'existence; à cet effet, introduisons l'ensemble $\mathcal C$ formé des $(1,1)$ courants positifs ferm\'es $T$ tels que $\{T\}=Z(\alpha)$; c'est un sous-ensemble convexe et faiblement compact de l'espace des courants. Rappelons que ses éléments sont $\mathcal F$ invariants.  Considérons également le courant représentant la partie négative $N$:

     $$ S=\sum_{i=1}^p\lambda_i[D_i] $$

Suivant la formule (\ref{eta})et d'après les résultats de la section 2,  on hérite d'une application  $\beta:\mathcal C\rightarrow \mathcal C$ définie comme suit:

pour tout $T\in\mathcal C$,

$$\beta (T)=\eta_{T+S},$$  ce qui a effectivement un sens après normalisation $\{\eta_{T+S}\}=Z(\alpha)$.

Le théoréme \ref{thme princ} et alors une simple conséquence du théorème du {\it point fixe de Tychonoff}, compte-tenu du 

\begin{lemme}
  L'application $\beta$ ci-dessus est continue (pour la topologie faible).
\end{lemme}

Prouvons ce lemme.
Soit $T\in\mathcal C$ et $(T_n)$ une suite de courants de $\mathcal C$ convergeant faiblement vers $T$

Par compacit\'e de $\mathcal C$, on peut supposer que la suite $(\beta (T_n))$ converge; il s'agit alors de montrer que sa limite est {\it précisément $\beta (T)$}.  

Considérons un recouvrement $(U_j)_{j\in J}$ de $M$ par des ouverts de Stein contractiles $U_j$. Sur chaque $U_j$, tout courant fermé positif de bidegré $(1,1)$ admet donc un potentiel {\it psh}. Quitte à raffiner ce recouvrement, on peut exhiber sur chaque $U_j$ une collection de fonctions $f_{j,i}\in\mathcal O (U_j)$ définissant localement $D_i,i=1,...,p$ par les équations (réduites) respectives $f_{j,i}=0$ ainsi qu'une forme différentielle holomorphe $\omega_j$  (sans diviseur de zéros) définissant  le feuilletage. Ces formes se recollent sur les intersections $U_j\cap U_k$ suivant le cocycle multiplicatif $(g_{jk})$:

$$\omega_j=g_{jk}\omega_k$$
qui représente le fibré normal $N_\mathcal F$ du feuilletage.

 Pour tout $n$,  il existe sur chaque ouvert $U_j$ une fonction psh    $\varphi_{n,j}$ telle que 

$$\frac{i}{\pi}\partial\overline{\partial}\varphi_{n,j}=T_n$$
dont l'unicit\'e est assur\'ee d\`es qu'on se prescrit les conditions suivantes:

\noindent pour tout $n$,
$$ sup_j\ \varphi_{n,j}=0\ ,\ \varphi_{n,j}-\varphi_{n,k}=-\log |g_{jk}|-\sum_i\lambda_i\log |f_{j,i}|+\sum_i\lambda_l\log |f_{k,i}|$$
Il existe par conséquent une suite de réels $(c_n)$ telle que pour tout $n$, on ait en restriction à l'ouvert $U_j$:

   $$\beta (T_n)=\eta_{T_n+S}=\frac{i}{\pi}e^{2\varphi_{n,j}+2\sum_i\lambda_i\log |f_{j,i}|+c_n}\omega_j\wedge\overline{\omega_j}$$

Quitte  \`a extraire une sous-suite, on peut supposer (\cite{ho}) que pour tout $j$, $(\varphi_{j,n})_{n\in\mathbb N}$ converge dans $L^1_{loc}(U_j)$ vers une fonction plurisousharmonique $f_j$  telle que sur les intersections $U_j\cap U_k$, on ait:

$$f_j-f_k=-\log |g_{jk}|-\sum_i\lambda_i\log |f_{j,i}|+\sum_i\lambda_l\log |f_{k,i}|$$

 Par construction, la collection des $\frac{i}{\pi}\partial\overline{\partial }f_i$ donnent lieu, par recollement, à un courant positif qui n'est rien d'autre que $T$.

Pour une constante réelle $c$ {\it ad hoc}, on obtient donc qu'en restriction \'a une carte locale $U_i$,

$$\beta(T)=\frac{i}{\pi}e^{2f_{j}+2\sum_i\lambda_i\log |f_{j,i}|+c}\omega_j\wedge\overline{\omega_j}$$

Posons $u_n=e^{c-c_n}\eta_{T_n+S}$; sur $U_j$, on a

    $$u_n-\beta (T)=\frac{i}{\pi}e^c\prod_i |f_{j,i}|^{2\lambda_i}e^{2({\varphi_{n,j}-f_j})}\omega_j\wedge\overline{\omega_j}$$

Fixons une forme test $\xi\in\Omega_c^{n-1,n-1}(U_j)$. Compte-tenu du fait que $\varphi_{n,j}$ et donc $f_j$ ne prend pas de valeurs $>0$,on déduit du théorème des accroissements finis qu'il existe une constante $D_j$ telle que pour tout $n$, on ait

$$|\langle u_n-\beta (T),\xi\rangle|\leq D\int_K|\varphi_{n,j}-f_j|dm$$
où $K$ désigne le support de $\xi$ et $m$ la mesure de lebesgue sur $U_j$. Au moyen d'une partition de l'unité, on en déduit que $u_n$ converge vers $\beta (T)$ sur la variété $M$. Puisque $\beta (T_n)$ est par ailleurs convergente, la suite $(c_n)$ admet une limite, nécessairement égale à $c$ puisque $\{\beta (T)\}=\{\beta (T_n)\}$. On a donc bien que $\beta (T_n)$ converge vers $\beta (T)$ 

\end{proof}

L'objet de la section qui suit est de décrire la nature des singularités pouvant apparaître dans la classe de feuilletages que nous étudions. Il sagit d'une étude purement locale qui peut éventuellement présenter un intérêt par elle-même.

\section{Analyse locale des singularités}

Les objets considérés ici seront de nature locale et il sera souvent commode d'adopter le langage des germes, sachant que , par abus de langage, la confusion sera souvent faite entre ``germe'' et ``représentant d'un germe''.
\medskip

On se donne sur  $({\CC}^n,0)$, un germe de  feuilletage d\'efini par une forme holomorphe int\'egrable $\omega$, ainsi qu'une fonction {\it psh} $\varphi$ telle que 

     $$\eta=ie^{2\varphi}\omega\wedge\overline{\omega}$$
soit ferm\'ee (au sens des courants) et en particulier $\mathcal F$ invariante.\\

\begin{THM} \label{structlocale} Soit $\mathcal F$ un germe de feuilletage holomorphe de codimension 1 v\'erifiant les hypoth\`eses ci-dessus. Alors $\mathcal F$ admet une int\'egrale premi\`ere {\bf élémentaire}, c'est-à-dire du type $f=\prod_{i=1}^mf_i^{\gamma_i}$ o\`u les $\gamma_i$ sont des réels positifs et $f_i\in{\mathcal O}_n$.
\end{THM}


D'après \cite{cema}, il suffit en fait d'\'etablir ce r\'esultat en restriction \`a un 2-plan g\'en\'erique; on peut donc supposer que $n=2$.

On constate par ailleurs que $\mathcal F$ ne peut être dicritique (sinon, cela cr\'eerait un ph\'enom\`ene de ``concentration de masse'' et forcerait le courant $\eta$ à admettre un nombre de Lelong non nul à l'origine).\\

 La preuve du théorème 2 s'articule alors comme suit Elle s'articule comme suit; consid\'erons le morphisme $\pi$ de r\'eduction des singularit\'es de $\mathcal F$. On sait identifier les feuilletages à singularit\'es r\'eduites susceptibles d'admettre un courant invariant \cite{br}. Dans le cas pr\'esent, o\'u le courant $T$ a une forme tr\'es particuli\'ere, cette liste peut être affin\'ee. En étudiant les germes de courants d\'ependant d'une variable complexe de la forme $ie^{2\varphi} dz\wedge d\overline{z}$ et invariants par un germe de Diff$(\mathbb C,0)$, on peut par ailleurs analyser la structure des groupes d'holonomie projective.
On recolle ensuite ces informations locales et semi-locales (dans l'esprit de ce qui est fait dans \cite{pa}).\\

\begin{lemme} \label{TinvparH}
  Soit $T$ un germe de courant dans $(\mathbb C,0)$ de la forme 

$$T=ie^{2\varphi} dz\wedge d\overline{z}$$
o\`u $\varphi$ est psh; supposons que $T$ soit invariant par l'action d'un sous groupe $H$ finiment engendré de $Diff({\CC},0)$. Soit $h\in H$; pour tout voisinage $V$ de $0$, il existe un ouvert $U\subset V$ contenant $0$ tel que $h(U)=U$. 

\end{lemme}

\begin{cor}
Sous les hypothèses du lemme \ref{TinvparH}, $H$ est analytiquement conjugué à un sous-groupes du groupe des rotations 

$$R=\{z\rightarrow e^{2i\pi\lambda}z\}$$

i.e, il existe $\varphi\in Diff({\CC},0)$ tel que

$$\varphi^{-1}H\varphi\subset R$$
\end{cor}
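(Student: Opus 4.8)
The plan is to prove the equivalent statement that the group $H$ is analytically conjugate to a subgroup of $R$, in two movements: first I would linearize each element of $H$ individually, and then promote this to a single conjugacy valid for the whole group.

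For the first movement, fix a disc $D_\delta$ on which every generator of $H$ — hence, after shrinking, every prescribed $h\in H$ — is biholomorphic onto its image, and apply Lemma~\ref{TinvparH} with $V=D_\delta$ to obtain an open neighbourhood $U\subset D_\delta$ of $0$ with $h(U)=U$. The connected component $C$ of $0$ in $U$ is again invariant (it is carried into itself by both $h$ and $h^{-1}$), bounded and connected, hence is a hyperbolic Riemann surface, and $h|_C$ is a biholomorphism of $C$ fixing $0$. Lifting to the universal cover $\pi\colon\mathbb D\to C$ normalised at $0$, the map $h$ lifts to an automorphism of $\mathbb D$ fixing the origin, that is, to a rotation $z\mapsto e^{i\theta}z$ with $e^{i\theta}=h'(0)$; and since $\pi$ is a local biholomorphism near $0$, the germ of $h$ at $0$ is analytically conjugate to $z\mapsto h'(0)z$, in particular $|h'(0)|=1$. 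As a consequence, any $f\in H$ with $f'(0)=1$ must be the identity, so the $1$-jet homomorphism $j\colon H\to\CC^*$, $h\mapsto h'(0)$, is injective; in particular $H$ is abelian and $j(H)$ is a finitely generated subgroup of the unit circle.

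For the second movement I would distinguish two cases. If $j(H)$ is a torsion group, then it is finite (a finitely generated torsion abelian group is finite), hence $H$ is finite, and the usual averaging $\varphi(z)=\frac{1}{|H|}\sum_{g\in H}j(g)^{-1}g(z)$ conjugates $H$ to the linear group $j(H)\subset R$. Otherwise $j(H)$ contains some element $\lambda_0$ of infinite order, that is, $\lambda_0=e^{2i\pi t}$ with $t$ irrational; taking $h_0\in H$ with $h_0'(0)=\lambda_0$ and conjugating $H$ by a linearization of $h_0$, we may assume $h_0(z)=\lambda_0 z$, and since $H$ is abelian every $g\in H$ commutes with $z\mapsto\lambda_0 z$; comparing Taylor coefficients, $\lambda_0^{\,n}\neq\lambda_0$ for $n\geq 2$ forces $g(z)=g'(0)z$, so $H$ becomes a group of rotations. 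In both cases $\varphi^{-1}H\varphi\subset R$ for a suitable $\varphi\in\mathrm{Diff}(\CC,0)$.

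The main obstacle is exactly the step from the Lemma, which only controls each element of $H$ separately, to one simultaneous conjugacy: the mechanism that makes it work is that the Lemma not only linearizes each element but forces every element tangent to the identity to be trivial, which renders $H$ abelian and reduces the problem to the elementary classification of finitely generated abelian subgroups of $\CC^*$. A more routine technical point, settled along the way, is the reduction of ``invariant open neighbourhood of $0$'' to ``linearizable germ'', which I would handle by uniformizing the invariant component $C$ and using that a biholomorphism of a hyperbolic Riemann surface fixing a point is elliptic.
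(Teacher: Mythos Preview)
Your proof is correct and follows the same two-step strategy as the paper: use the invariant neighbourhood furnished by the Lemma to linearize each element of $H$ individually (the paper simply invokes the classical invariant-domain theorem, while you reprove it via uniformization of the invariant component), deduce that $H$ has no nontrivial element tangent to the identity and is therefore abelian, and then pass to a simultaneous linearization. The only difference is that for this last step the paper quotes a well-known result (referenced to Loray) on finitely generated abelian subgroups of $\mathrm{Diff}(\CC,0)$ with analytically linearizable elements, whereas you give a self-contained argument via the torsion/infinite-order dichotomy on $j(H)$.
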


En effet, par le théorème du domaine invariant chaque élément $h$ de $H$ est analytiquement conjugué à une rotation. En particulier, $H$ ne comporte pas de germe non trivial tangent à l'identité et est donc abélien. Il est alors bien connu (cf par exemple \cite{lo}) que tout sous-groupe abélien de type fini de $Diff({\CC},0)$ dont chaque élément est analytiquement linéarisable est lui-même analytiquement linéarisable.\\

 {\it Preuve du lemme \ref{TinvparH}}

Soit $h\in H$ tel que $h^*T=T$. Supposons par l'absurde qu'il n'existe aucun ouvert $U\ni 0$ tel que $h(U)=U$.

Soit ${\mathbb D}_r\subset\CC$ le disque ouvert $\{|z|<r\}$. Choisissons $0<r_1<r_2$  suffisamment petits de telle sorte que $T=ie^{2\varphi} dz\wedge d\overline{z}$ soit d\'efini sur ${\mathbb D}_{r_2}$, que $h$ soit univalent sur ${\mathbb D}_{r_1}$ et $h({\mathbb D}_{r_1})\subset {\mathbb D}_{r_2}$.

 Soit $\gamma\subset {\mathbb D}_{r_1}$ une courbe rectifiable. Sa longueur, suivant la forme m\'etrique $g=e^{\varphi}|dz|$ est donn\'ee suivant la formule

$$l_g(\gamma)=\int_\gamma e^{\varphi (z)}d\mathcal H$$

o\`u $\mathcal H$ d\'esigne la mesure de Haussdorff de dimension 1; pour $a\in\CC$, notons $\gamma_a$ le segment $[0,a]$. La non-existence de domaine invariant par $h$ implique qu'il existe une suite $(a_n)$ de points de ${\mathcal D}_{r_1}\setminus K_{r_1}$ convergeant vers l'origine ainsi qu'une suite d'entiers relatifs $k_n$ telle que 

$$h^{k_n} (\gamma_{a_n})\in {\mathbb D}_{r_2}\ ,h^{k_n} (\gamma_{a_n})\cap ( {\mathbb D}_{r_2}\setminus {\mathbb D}_{r_1})\not= \emptyset $$ 

Compte tenu des propri\'et\'e d'invariance de $T'$, on a par ailleurs

$$l_g(h^{k_n} (\gamma_{a_n}))=l_g(\gamma_{a_n}).$$

Si l'on reprend le raisonnement men\'e dans \cite{brsurf}, on constate que qu'il existe un r\'eel strictement positif $M$ tel que pour tout $n$, on ait

$$l_g(h^{k_n} (\gamma_{a_n}))>M$$
alors que $l_g(\gamma_{a_n})$ converge vers $0$, ce qui m\`ene \'evidemment à une contradiction.\qed\\ \\

 {\it Preuve du théorème \ref{structlocale}} 

Consid\'erons l'arbre de r\'eduction $\mathcal A$ du feuilletage ${\mathcal F}_\omega$ donn\'e sur un voisinage $V$ de l'origine dans ${\mathbb C}^2$ par $\omega=0$.

On notera 

$$\rho:U\rightarrow V$$ le morphisme de r\'eduction, d\'efini sur un voisinage $U$ de $\mathcal A$ et compos\'e d'une succession d'\'eclatements ponctuels.\\

Par hypoth\`ese, le feuilletage r\'eduit  $\tilde {\mathcal F}=\rho^*\mathcal F$ admet un $(1,1)$ courant positif ferm\'e invariant par holonomie

$$\tilde T=\pi^*T:=ie^{2\phi\circ\rho}\rho^*\omega\wedge\rho^*\overline{\omega}$$


Par construction, $\tilde T$ est localement de la forme $ie^{2\tilde\varphi}\tilde\omega\wedge\overline{\tilde\omega}$ o\`u $\tilde\varphi$ est {\it psh} et $\tilde\omega$ un générateur du feuilletage saturé  $\tilde {\mathcal F}$.

Soit $S$ l'ensemble des s\'eparatices de $\mathcal F_\omega$. Puisqu'on est dans une situation {\it non dicritique}, $S$ est constitu\'ee d'une union finie de $p$ courbes irr\'eductibles.

Désignons par  $\tilde S$ la transform\'ee sricte de $S$ par $\pi$. D'apr\`es l'analyse pr\'ec\'edente, les singularit\'es qui apparaissent apr\`es r\'eduction sont du type Siegel lin\'earisable, c'est \`a dire sont donn\'es par des formes diff\'erentielles qui s'\'ecrivent à conjugaison  {\it analytique} pr\`es

   $$zdw+\lambda wdz,\ \lambda >0$$
et qui admettent par cons\'equent des intégrales premières du type $zw^\lambda$.

L'ensemble des singularit\'es Sing $\tilde{\mathcal F}$ du feuilletage r\'eduit  $\tilde{\mathcal F}$ est alors exactement localis\'e aux intersections des composantes irr\'eductibles de $\pi^{-1}(S)$.

Nous reprenons  pour ce qui suit la terminologie et les notations adopt\'ees dans \cite{pa}. Soit $Z\subset \tilde S$  une zone holomorphe maximale et $f_Z$ une transversale {\it holomorphe} d\'efinie sur un voisinage $U_Z$ de $Z$.

Supposons $Z\not= \tilde S$ et consid\'erons une composante adjacente $C\subset \mathcal A$. Soit $C^*=C\setminus Sing \ {\tilde {\mathcal F}}$.

Choisissons un point $m\in C^*$ ainsi qu'une transversale locale $f_m$ en $m$ au feuilletage ${\tilde{\mathcal F}}$. Soit $T_m= (\CC,0)$ l'image de cette transversale locale. Au voisinage de $m$, on a $f_Z=l(f_m)$ o\`u $l\in {\mathcal O}_1$. Suivant \cite{pa}, on introduit le {\it groupe d'invariance de $f_Z$ par rapport \`a $f_m$}:

$$Inv(f_Z,f_m)=\{g\in Diff(T_m)|l\circ g=l\}$$
qu'on enrichit par le groupe  $G_m\in Diff(T_m)$ d'holonomie projective de $C^*$ afin d'obtenir le groupe $\overline{G_m}$ engendr\'e par $ Inv(f_Z,f_m)\cup G_m$.

D'apr\`es ce qui pr\'ec\`ede, $G$ est analytiquement conjugu\'e \`a un sous-groupe dense du groupe des rotations et ceci entraîne, d'apr\`es \cite{pa} que ${\tilde{\mathcal F}}$ admet au voisinage de $Z\cup C$ une int\'egrale premi\`ere (une transversale dans le langage de \cite{pa}) logarithmique; c'est-\`a-dire une intégrale premi\`ere multiforme $f$ telle que 
$df/f$ soit une forme ferm\'ee \`a p\^oles simples d\'efinissant le feuilletage sur la zone consid\'er\'ee (et bien d\'efinie modulo multilication par une constante). 

Apr\`es normalisation et compte-tenu de la description des singularit\'es locales, on peut de plus affirmer que les r\'esidus de cette forme ferm\'ee sont des r\'eels positifs.

Supposons maintenant qu'il existe une zone logarithmique non holomorphe; des raisonnements similaires (voir \cite{pa}) pour les d\'etails)  montrent qu'elle s'étend en une zone logarithmique sur les composantes exceptionnelles adjacentes. Ceci achève la preuve du théorème \ref{structlocale}.
Nous revenons maintenant au cadre global étudié dans les trois premières sections.\qed

\section{Structure du feuilletage au voisinage d'une hypersurface invariante}

Nous réadoptons les notations de la section \ref{metsurespfeuilles}: $\alpha:=c_1({N_\mathcal F}^*)$ est supposée pseudo-effective et admet $\alpha=\{N\}+Z$ comme décomposition de Zariski.

Il pourra être utile d'utiliser la caractérisation des singularités locales pour donner une ``description'' du feuilletage $\mathcal F$ près d'une hypersurface invariante et plus particulièrement au voisinage de $H=\vert N\vert$.

Soit $p$ un point de $M$ tel que $p\notin H$. Quand $p$ est de plus un point régulier du feuilletage, le conormal est engendré par $dz$ dans une coordonnée $z$ adéquate.

La forme positive $\eta_T$ mentionnée dans le théorème \ref{thme princ}  s'explicite alors comme suit:

$$\eta_T=\frac{i}{\pi} e^{2\varphi (z)}dz\wedge d{\overline z}$$
avec $\Delta \varphi:=\frac{\partial^2\varphi}{\partial z\partial{\overline z}}=\varepsilon e^{2\varphi}$, avec $\varepsilon=0$ (cas {\it transversalement euclidien}) ou $\varepsilon=1$ ({\it cas transversalement hyperbolique}).

Rappelons comment sont construites ces structures transverses.

 Soit $d^0$ la métrique euclidienne standard $idu\wedge d\overline{u}$ sur la droite complexe $U^0=\CC$ et $d^1=\frac{i}{\pi}\frac{du\wedge d\overline{u}}{{(1-{|u|}^2)}^2}$ la métrique de Poincaré (convenablement normalisée) sur le disque $U^1=\DD=\{\vert u\vert <1\}$ (plus exactement leurs formes d'aire respectives).

Soit $I^\varepsilon$ le groupe des isométries conformes de $U^\varepsilon, \varepsilon=0,1.$ Il existe alors au voisinage de chaque point $p\notin H$ un germe d'intégrale première  $f$ à valeur dans $U^\varepsilon$ telle que $\eta_T=f^*d^\varepsilon$ et $f$ est uniquement définie modulo l'action à gauche de $I^\varepsilon$. C'est un fait classique lorsque $p$ est régulier (auquel cas $f$ est une submersion) qui s'étend sans difficultés à $p\in\ Sing\ \mathcal F\setminus H$ en utilisant que 
$V\setminus Sing\ \mathcal F$ est simplement connexe pour un voisinage ouvert $V$ convenablement choisi et arbirairement petit de $p$ ainsi que le théorème de prolongement d'Hartogs. Notons que le lieu critique de $f$ coïncide localement avec $Sing\ \mathcal F$ et qu'il ne s'agit donc plus nécessairement d'une submersion. 

\begin{definition}
Le faisceau défini sur $M\setminus H$ et déterminé par la collection de ces  germes $f$ (lorsque $p$ varie) est appelé {\bf faisceau des intégrales premières admissibles} et sera noté ${\mathcal I}^\varepsilon$.
\end{definition}

Il sera également commode d'y adjoindre le faisceau suivant.
\begin{definition}
Le faisceau défini sur $M\setminus H$ et déterminé par la collection des dérivées logarithmiques $\frac{df}{f},f\in{\mathcal I}^\varepsilon$  est appelé {\bf faisceau des dérivées logarithmiques admissibles} et sera noté ${\mathcal I}_{d\log}^\varepsilon$
\end{definition}

\bigskip
Rappelons (théorème \ref{structlocale}) que le feuilletage possède au voisinage de tout point $p$ de $M$ une intégrale première ``élémentaire'' du type $$F=\prod_{i=1}^mf_i^{\gamma_i}$$ et admet par conséquent comme seule séparatrice locale le germe d'hypersurface $X_p=\{\prod_{i=1}^mf_i=0\}$. Cette situation inclut évidemment le cas où $\mathcal F$ est régulier en $p$!

\begin{lemme}\label{logloc}Sur un voisinage suffisamment petit $V_p$ de $p$,le feuilletage ${\mathcal F}_{V_p\setminus X}$ est définie par une section de ${\mathcal I}_{d\log}^\varepsilon$ qui s'étend en une forme logarithmique $\eta_p$ sur $V_p$ à pôles exactement dans sur $X$. Cette forme $\eta_p$ est par ailleurs unique.
 \end{lemme}

\begin{proof}
 Traitons en premier lieu le cas où $\mathcal F$ est {\bf régulier }en $p$. Si  $p$ appartienne à une composante $H_{i_0}$ de $H$, on convient de poser $\lambda=sup_{\varepsilon>0}\nu(T_{min,\varepsilon}, H_{i_0 })$ (on réadopte ici les notations du théorème \ref{deczar}) et $\lambda=0$ sinon.  Soit $(T_p,p)\simeq (\DD,0)$ un germe de transversale à $\mathcal F$ en $p$.

La restriction de $\eta_T$ à $T_p$ est alors de la forme $\tilde{\eta_T}=\frac{i}{\pi} e^{2\varphi (z)}dz\wedge d{\overline z}$ où $\varphi$ est une fonction sous harmonique sur $T_p$, lisse en dehors de $p$ et vérifiant l'EDP
\begin{equation}\label{massedirac}
\Delta \varphi:=\frac{\partial^2\varphi}{\partial z\partial{\overline z}}=\varepsilon e^{2\varphi}+\lambda\delta_p
\end{equation}

où $\delta_p$ est la masse de Dirac en $p$.

Sur tout ouvert sectoriel de $T_p$ de la forme  $S=\{z\not=0;\alpha<\arg z<\alpha + 2\pi\}$, il existe par simple connexité une section $f$ de ${\mathcal I}^\varepsilon$. 

Soit $\gamma: [0,1]\rightarrow T_p$ une courbe rectifiable telle que $\gamma (1)=p$ et $\gamma (t)\in S, t\not=1$. Puisque $\tilde{\eta_T}(=f^*d^\varepsilon$ sur $S$) est à coefficients bornés sur $T_p$, on obtient que $f\circ\gamma (t)$ admet une limite $l$ dans $U^\varepsilon$ quand $t\rightarrow 1$, laquelle est indépendante du chemin $\gamma$ considéré.

Soit $\tau_{c} (f), \tau_c\in I^\varepsilon$ la section de ${\mathcal I}^\varepsilon$ définie sur $S$ et obtenue par prolongement de $f$ le long d'un lacet $c:[0,1]\rightarrow T_p\setminus\{p\}$. On a visiblement $\tau_c (l)=l$, de sorte qu'on peut se ramener au cas ou $l=0$ et $\tau_c$ est donc une rotation fixant $0$. On obtient alors facilement que $f(z)=z^\mu h(z)$ où $\mu$ est un certain réel positif tel que $e^{2i\pi \mu}$ soit l'angle de la rotation et $h\in {\mathcal O}^*$. Quitte à se placer dans une coordonnée holomorphe appropriée, on peut donc supposer que $f(z)=z^\mu$; on conclut finalement que $\tilde{\eta_T}=f^*d^\varepsilon={\mu}^2\frac{i}{\pi}{\vert z\vert}^{2\mu -2} {dz\wedge d\overline{z}}$ si $\varepsilon =0$

 \hskip 5 pt $={\mu}^2\frac{i}{\pi}{\vert z\vert}^{2\mu -2}\frac{dz\wedge d\overline{z}}{{(1-{|z|}^{2\mu})}^2}$ si $\varepsilon=1$.
 
 (ces égalité, vraies sur $T_p\setminus \{p\}$ s'étendent en fait sur $T_p$ car les mesures impliquées dans les membres de gauche et droite n'affectent pas de masse au point $p$).
 
 En particulier, on a $\mu>1$ et $\varphi (z)= (\mu -1)\log\vert z\vert + O(1)$ ce qui entraîne finalement d'après (\ref{massedirac}) que $\mu=\lambda +1$.
 . La forme $\eta_p$ est alors l'extension méromorphe de $\frac{df}{f}$ sur $T_p$ (et donc sur un voisinage de $p$ par flow-box).
 
 Il reste à considérer le cas où $p\in H$ est également une singularité du feuilletage. On se ramène à la situation précédente; pour ce faire, choisissons $p'$ régulier, $p'\in X_p$, ainsi qu'une transversale $T_{p'}$, un secteur $S'\in T_{p'}$, une section $f\in {\mathcal I}^\varepsilon (S')$ et une forme $\eta_{p'}$ définies comme ci-dessus. Pour $S'$ convenablement choisi, la section locale $f$ se factorise à travers une détermination $\overline{F}$ de l'intégrale première élémentaire $F$ suivant
 
 $$f(z)={(\psi ({\overline{F}^\alpha(z)})}^\beta$$ 
 
 avec $\alpha,\beta>0$ et $\psi$ un difféomorphisme local de $(\CC,0)$. Soit $\delta>0$ suffisamment petit et $V_\delta\subset V_p$ la composante connexe de $\vert F\vert<\delta$ contenant $p$ (et donc $p'$). En exploitant maintenant à nouveau que $Sing \mathcal F$ est de codimension $2$ et ce qui précède, on observe que le prolongement analytique de $\eta_{p'}$ dans  $V_\delta$ induit la forme $\eta_p$ requise au voisinage de $p$.
\end{proof}

Soit $K$ une hypersurface invariante par le feuilletage et $K_1,..., K_r$ ses composantes irréductibles.

Posons $\nu_i=sup_{\varepsilon>0}\nu(T_{min,\varepsilon}, K_i)$. On rappelle que notamment que $\nu_i=0$ si et seulement si $K_i\nsubseteq H$. L'unicité de la forme logarithmique construite dans la démonstration précédente fournit immédiatement une version (semi) globale du lemme \ref{logloc}.

\begin{PROP}\label{formsemiglob} Sur un voisinage suffisamment petit $V$ de $K$, le feuilletage ${\mathcal F}_{V\setminus K}$ est définie par une section de ${\mathcal I}_{d\log}^\varepsilon$ qui s'étend en une forme logarithmique $\eta_K$ sur $V$   dont le résidu le long de $K_i$ est égal à $\nu_i +1$, $i=1,...,r$. Cette forme $\eta_K$ est par ailleurs unique.
\end{PROP}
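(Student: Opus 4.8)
The plan is to glue the local data provided by Lemma~\ref{logloc} along the connected hypersurface $K$ and to propagate the resulting logarithmic form into a full neighbourhood $V$ of $K$. First I would cover a neighbourhood of $K$ by small open sets $V_p$, $p\in K$, on each of which Lemma~\ref{logloc} furnishes a logarithmic form $\eta_p$ defining $\mathcal F$ on $V_p\setminus X_p$, with polar divisor exactly $X_p$, and moreover unique with this property. On an overlap $V_p\cap V_q$ the two forms $\eta_p$ and $\eta_q$ both define $\mathcal F$, both are logarithmic with poles along the local separatrix, hence by the uniqueness clause of Lemma~\ref{logloc} (applied at any point of the intersection) they coincide on the connected components of $V_p\cap V_q$ meeting $K$. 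Thus they patch to a single logarithmic form $\eta_K$ on a neighbourhood $V$ of $K$, defining $\mathcal F$ on $V\setminus K$ and with polar support contained in $K$.

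The second step is to identify the residues. Fix an irreducible component $K_i$ and a generic smooth point $p\in K_i$ which is a regular point of $\mathcal F$, not lying on any other component of $K$ nor on $\mathrm{Sing}\,\mathcal F$. In the proof of Lemma~\ref{logloc}, at such a regular point the restriction of $\eta_T$ to a transversal $T_p\simeq(\mathbb D,0)$ satisfies the PDE $\Delta\varphi=\varepsilon e^{2\varphi}+\lambda\delta_p$ with $\lambda=\sup_{\varepsilon>0}\nu(T_{min,\varepsilon},K_i)=\nu_i$, and the local admissible first integral was shown to be $f(z)=z^\mu$ with $\mu=\lambda+1=\nu_i+1$; consequently $\eta_p=df/f$ has residue $\mu=\nu_i+1$ along $\{z=0\}$, i.e.\ along $K_i$. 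Since the residue of a logarithmic form along an irreducible component is a global constant along that component, this computes the residue of $\eta_K$ along $K_i$ to be $\nu_i+1$ for every $i$.

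Finally, uniqueness of $\eta_K$ is inherited directly from the uniqueness part of Lemma~\ref{logloc}: any logarithmic form on $V$ defining $\mathcal F$ on $V\setminus K$ with poles in $K$ restricts, near each $p\in K$, to a logarithmic form defining $\mathcal F_{V_p\setminus X_p}$ with poles in $X_p$, hence equals $\eta_p$ there, hence equals $\eta_K$ on $V$. The main obstacle I anticipate is the patching step: one must check that the local polar divisors $X_p$ are exactly the local branches of $K$ (which is where the elementary-first-integral description from Theorem~\ref{structlocale} and the identification $|N|=H$ are used, so that every separatrix through a point of $K$ does lie in $K$), and that the uniqueness of the $\eta_p$ is robust enough to force agreement on every overlap component that touches $K$ — in particular that no spurious extra polar components appear off $K$, which follows because $\eta_p$ has poles exactly along $X_p\subset K$. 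Once the cocycle of identifications is seen to be trivial in this strong pointwise sense, the gluing and the residue computation are routine.
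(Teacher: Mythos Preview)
Your approach is essentially the paper's: the proof there is the single sentence ``L'unicit\'e de la forme logarithmique construite dans la d\'emonstration pr\'ec\'edente fournit imm\'ediatement une version (semi) globale du lemme~\ref{logloc}'', and your gluing-by-uniqueness argument is exactly this, with the residue computation correctly extracted from the proof of Lemma~\ref{logloc}.

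However, your final paragraph contains a genuine misconception. You claim that ``every separatrix through a point of $K$ does lie in $K$'' and hence $X_p\subset K$, so that $\eta_K$ has poles only along $K$. This is \emph{false} in general, and the paper explicitly says so immediately after the proposition: ``le support $|(\eta_K)_\infty|$ du diviseur des p\^oles de $\eta_K$ peut \^etre \emph{a priori} plus gros que $K$ (par construction, les p\^oles de $\eta_K$ sont exactement localis\'es sur les s\'eparatrices locales le long de $K$)''. At a singular point $p\in K$ the elementary first integral $\prod f_i^{\gamma_i}$ may well have branches $\{f_i=0\}$ that are not local branches of $K$; these contribute extra poles to $\eta_K$ near $p$. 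This is precisely the phenomenon exploited in Proposition~\ref{seplocale}, where $|(\eta_K)_\infty|\supsetneq K$ characterises exceptionality of the family $\{K_1,\dots,K_r\}$.

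Fortunately the gluing does not need $X_p\subset K$: the local uniqueness clause of Lemma~\ref{logloc} already forces $\eta_p=\eta_q$ on overlaps meeting $K$, regardless of where the poles sit. So your steps 1--3 go through, but you should drop the claim that the polar support equals $K$, and correspondingly rephrase your uniqueness argument (the competing form is a section of $\mathcal I_{d\log}^\varepsilon$ on $V\setminus K$ extending logarithmically across $K$, not a form ``with poles in $K$'').
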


Remarquons que le support $\vert (\eta_K) _\infty\vert$ du diviseur des pôles $(\eta_K) _\infty$ de $\eta_K$ peut être {\it a priori} plus gros que $K$ (par construction, les  pôles de $\eta_K$ sont exactement localisés sur les séparatrices locales le long de $K$). En termes d'intersection, ceci se reflète facilement de la façon suivante:
\begin{lemme}\label{lemautointersec} Soit $\theta$ une forme de Kähler sur $M$; pour tout $1\leq i\leq r$, posons 

$$a_i=\{K_i\}\sum_j(\mu_j+1)\{K_j\}{\{\theta\}}^{n-2}.$$
Alors, pour tout $i$, $a_i\leq 0.$

De plus, $a_i=0,\ i=1,...,r$ si et seulement si $\vert (\eta_K) _\infty\vert=K$.

\end{lemme}

Nous supposons par la suite que $K$ est de plus {\it connexe}.

\begin{PROP}\label{seplocale}La famille $\{K_1,....,K_r\}$ est exceptionnelle si et seulement si $\vert (\eta_K) _\infty\vert\varsupsetneq K$.
\end{PROP}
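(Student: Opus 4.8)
The plan is to deduce this proposition from the previously established results about the Zariski decomposition of $\alpha := c_1({N_\mathcal F}^*)$ and the structure of the logarithmic form $\eta_K$. Let me set up the notation: write $\alpha = \{N\} + Z$ for the Zariski decomposition, where $N = \sum \lambda_i D_i$, and recall that by Proposition~\ref{Z(alpha)nef}(i) the components $D_i$ of the negative part are exactly the components of $K$ (or rather of the invariant hypersurface $H = |N|$) which carry positive Lelong number. The key auxiliary object is the pole divisor $(\eta_K)_\infty = \sum_j (\nu_j+1) K_j + (\text{extra components})$, where the ``extra components'' are those separatrices along $K$ that are {\it not} contained in $K$ itself. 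The dichotomy in the statement is: either $|(\eta_K)_\infty| = K$ (no extra components) or $|(\eta_K)_\infty| \supsetneq K$ (genuine extra separatrices sticking out).

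First I would recall that by Lemma~\ref{lemautointersec}, the numbers $a_i = \{K_i\} \sum_j (\mu_j+1)\{K_j\}\{\theta\}^{n-2}$ are all $\le 0$, with equality for all $i$ precisely when $|(\eta_K)_\infty| = K$. Here $\mu_j = \nu_j$ in the notation of Proposition~\ref{formsemiglob}. So the condition $|(\eta_K)_\infty| \supsetneq K$ is equivalent to: $a_{i_0} < 0$ for at least one index $i_0$. Now I would bring in Proposition~\ref{Z(alpha)nef}(viii): the family $\{K_1,\dots,K_r\}$ is exceptional if and only if the intersection matrix $(m_{ij}) = (\{K_i\}\{K_j\}\{\theta\}^{n-2})$ is negative definite. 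The strategy is therefore to compare ``$(m_{ij})$ negative definite'' with ``not all $a_i = 0$'', i.e.\ with ``the vector $\sum_j(\mu_j+1)\{K_j\}$ is not $q$-orthogonal to all $\{K_i\}$'', where $q(c,c') = -\int_M c\wedge c'\wedge\{\theta\}^{n-2}$ is the Hodge--Riemann form from Section~2.

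The heart of the argument: suppose first the family is exceptional. Then $(m_{ij})$ is negative definite on the space $W$ spanned by the $\{K_i\}$, so $q$ is positive definite on $W$. Since the $\mu_j+1$ are strictly positive and the $\{K_j\}$ are linearly independent (Proposition~\ref{propri\'et\'eszar}(ii)), the class $c := \sum_j(\mu_j+1)\{K_j\}$ is a nonzero element of $W$; hence $q(c,c) > 0$, and therefore $q(\{K_i\}, c) \ne 0$ for at least one $i$ — otherwise $c$ would be $q$-orthogonal to all of $W$, contradicting $q(c,c)>0$. Thus $a_{i_0} = -q(\{K_{i_0}\}, c) \ne 0$ for some $i_0$; combined with $a_{i_0}\le 0$ we get $a_{i_0}<0$, so $|(\eta_K)_\infty|\supsetneq K$. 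Conversely, suppose the family is {\it not} exceptional. Then by Proposition~\ref{Z(alpha)nef}(viii) the matrix $(m_{ij})$ fails to be negative definite. I would then invoke the Zariski-decomposition machinery applied to $\beta := \sum_j \{K_j\}$: the negative part $N(\beta)$ involves only a {\it proper} sub-family of the $K_j$ (or all of them with the associated matrix still degenerate in a controlled way), and the positive part $Z(\beta)$ is nef; since the $\mu_j$ with $K_j\subseteq H$ are determined by $\nu_j = \lambda_j$ and $K$ is connected and invariant, one shows that the only nef class of the form $\sum \mu_j\{K_j\}$ with $0\le\mu_j$ forcing $q$-orthogonality is $\mu_j+1$ matching exactly the coefficients, so all $a_i = 0$ and $|(\eta_K)_\infty| = K$.

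The main obstacle I anticipate is the converse direction — showing that non-exceptionality forces $|(\eta_K)_\infty| = K$. The forward direction is essentially linear algebra once Lemma~\ref{lemautointersec} and Hodge--Riemann are in hand. The converse requires genuinely using the connectedness of $K$ and the fact that $\eta_K$ is a {\it logarithmic} form whose residues are the specific numbers $\nu_i + 1$; I would argue that if some separatrix stuck out of $K$ then one could produce a nef class strictly inside the cone spanned by $\{K_1,\dots,K_r\}$ (roughly, $Z(\beta)$ would be nonzero and supported on a sub-family), contradicting that $K$ being non-exceptional already forces part of its cone to meet $\mathcal{MN}$ nontrivially — the interplay here between the ambient Zariski decomposition of $\alpha$ and the ``local'' one of $\beta$ near $K$ is the delicate point, and one should double-check that the residues $\nu_i+1$ genuinely coincide with the coefficients forced by nefness via the equation $\Delta\varphi = \varepsilon e^{2\varphi} + \sum\lambda_i\delta$ governing $\eta_T$ transversally.
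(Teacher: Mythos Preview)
Your forward direction is correct and coincides with the paper's, which simply argues by contrapositive: if $|(\eta_K)_\infty|=K$ then all $a_i=0$, whence $\{D\}^2\{\theta\}^{n-2}=\sum_i(\nu_i+1)a_i=0$ for $D=\sum_i(\nu_i+1)K_i$, so $(m_{ij})$ is not negative definite and the family is not exceptional.

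The converse, however, has a genuine gap. You correctly locate the starting point --- non-exceptionality yields, after reordering, a nonzero class $\sum_{i\le p}\mu_i\{K_i\}$ in $\mathcal{MN}$ with $\mu_i>0$, and by Proposition~\ref{Z(alpha)nef}(iii) its square vanishes --- but your completion is muddled. The assertion that the coefficients must ``match exactly'' $\nu_j+1$ is neither true nor needed, and the contradiction sketched in your last paragraph (producing a nef class in the cone to contradict non-exceptionality) is no contradiction at all, since non-exceptionality \emph{means} such a class exists. The paper's actual argument runs as follows. Set $\tilde K=K_1\cup\dots\cup K_p$ and apply Lemma~\ref{lemautointersec} to $\tilde K$ rather than $K$, obtaining $\tilde a_i=\{K_i\}\sum_{j\le p}(\nu_j+1)\{K_j\}\{\theta\}^{n-2}\le 0$ for $i\le p$. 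The ``simple calcul'' is the identity
\[
\sum_{i\le p}\mu_i\,\tilde a_i \;=\; \Bigl(\sum_{i\le p}\mu_i\{K_i\}\Bigr)\Bigl(\sum_{j\le p}(\nu_j+1)\{K_j\}\Bigr)\{\theta\}^{n-2}\;=\;0,
\]
which holds because $\sum_{i\le p}\mu_i\{K_i\}$ is proportional to $\{\eta_T\}$ by Proposition~\ref{Z(alpha)nef}(ii) and $\{\eta_T\}\{K_j\}=0$ for every $j$ by $\mathcal F$-invariance. Since each $\mu_i>0$ and each $\tilde a_i\le 0$, this forces $\tilde a_i=0$ for all $i\le p$, i.e.\ $|(\eta_{\tilde K})_\infty|=\tilde K$. \emph{Now} connectedness enters: any component $K_j$ with $j>p$ meeting $\tilde K$ would be a separatrix along $\tilde K$ not contained in $\tilde K$, hence a pole of $\eta_{\tilde K}$ outside $\tilde K$, which we have just excluded; since $K$ is connected this forces $p=r$, $\tilde K=K$, and $|(\eta_K)_\infty|=K$. (A side remark: your opening identification of the $D_i$ with ``exactly the components of $K$'' conflates the arbitrary connected invariant hypersurface $K$ with $H=|N(\alpha)|$; these need not coincide.)
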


\begin{proof} Soit $(M,\{\theta\})$ une polarisation de  $M$ fournie par une forme de Kähler $\theta$. On peut d'abord rappeler (viii) proposition \ref{Z(alpha)nef}) que la famille $\{K_1,....,K_r\}$ est exceptionnelle si et seulement  si la matrice $(m_{ij}={K_i}{K_j}{\omega}^{n-2})$ est définie 
négative.

Supposons que $\vert (\eta_K) _\infty\vert=K$; on déduit alors du lemme que précédent que $\{D\}^2{\{\omega\}}^{n-2}=0$, en posant $D=\sum_i (\mu_i +1)D_i$; la famille n'est donc pas exceptionnelle.

Supposons inversement que $\{D_1,...,D_r\}$ n'est pas exceptionnelle; quitte à réordonner les indices, il existe $1\leq p\leq r$, $\mu_1,\mu_2,...,\mu_p>0$ tels que 

$${(\sum_{i=1}^p\mu_i\{D_i\})}^2=0$$

(conséquence de iii) proposition \ref{Z(alpha)nef}).

En appliquant à nouveau le lemme \ref{lemautointersec}, un simple calcul nous montre que tous les pôles de $\eta$ restreinte à un petit voisinage de $\tilde K:=K_1\cup K_2\cup...\cup K_p$ sont localisés sur $\tilde K$. Par hypothèse de connexité, on a en fait $\tilde K=K$ et donc bien $\vert (\eta_K) _\infty\vert= K$.

\end{proof}




\section{Propriétés de l'application développante}\label{unifo}

Les feuilletages étudiés sont transversalement hyperboliques ou euclidiens en dehors du support $H$ de la partie négative (en un sens large car les intégrales premières locales qui définissent la structure métrique transverse peuvent avoir un lieu critique non vide, i.e: $\mathcal F$ peut présenter des singularités en dehors de $H$).  Suivant la théorie générale des feuilletages transversalement homogènes, on peut donc considérer l'application développante $\rho$ de $\mathcal F$, définie sur le revêtement universel $N$ de $M\setminus H$. Cette application est en fait induite par le prolongement analytique d'une intégrale première admissible locale ( bien défini sur $N$) et est unique modulo l'action de $I^\varepsilon$. Rappelons en quelques propriétés (cf.\cite{god}):

$\rho$ est holomorphe sur $N$, à valeur dans $U^\varepsilon$  et submersive en dehors de $S=\pi^{-1}(Sing\ \mathcal F)$ où $\pi$ désigne le morphisme de revêtement. 

Les feuilles du feuilletage relevé $\tilde{ \mathcal F}={\pi}^*\mathcal F$ sont données par les composantes connexes des niveaux de $\rho_{\vert N\setminus S}$.

Il existe une représentation $r$ de $\pi_1(M\setminus H)$ dans $I^\varepsilon$ telle que pour tout $(\gamma,x)\in \pi_1(M\setminus H)\times N$, on ait
   $$\rho(\gamma .x)=r(\gamma)\rho (x).$$
   
   Rappelons enfin que la développante est   {\it complète} si l'image de $\rho(N)$ est $U^\varepsilon$.\\

   
   \begin{THM}\label{dévelop}  L'application développante $\rho$ associée aux feuilletage $\mathcal F$ est {\bf complète}. De plus, pour tout $c\in U^\varepsilon$, la fibre $\rho^{-1}(c)$ est connexe.

\end{THM}

Nous aurons besoin des deux résultats suivants.

\begin{lemme}Soit $\mathcal F$ un feuilletage holomorphe défini au voisinage de $0\in{\CC}^n$ amettant une intégrale première élémentaire  $$F=\prod_{i=1}^mf_i^{\gamma_i}.$$ Soit $X_1=\{f_1=0\},\ p\in X_1$ régulier et $T_p$ une petite tranversale au feuilletage en $p$; alors le saturé de $T_p$ par $\mathcal F$ contient un ouvert de la forme $W\setminus \{\prod_{i=1}^m=0\}$ où $W$ est un voisinage de l'origine. 
\end{lemme}

Ce lemme admet, via un simple argument de flow-box la version semi-globale suivante:

\begin{cor}\label{satsemiglob}
  Soit $\mathcal F$ un feuilletage holomorphe défini au voisinage d'une hypersurface invariante connexe $K$. On suppose $\mathcal F$ à singularités élémentaires sur $K$. .Soit $ p\in K$ un point régulier et $T_p$ une petite tranversale au feuilletage en $p$; alors le saturé de $T_p$ par $\mathcal F$ contient un ouvert de la forme $W\setminus K$ où $W$ est un voisinage de $K$. 

\end{cor}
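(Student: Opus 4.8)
The statement is the semi-global analogue of the preceding lemma, obtained by a flow-box argument, so the plan is essentially to spread the local conclusion around the hypersurface $K$ using the connectedness hypothesis. First I would fix the regular point $p\in K$ and a small transversal $T_p$. By the lemma applied at $p$ (using that $\mathcal F$ has elementary singularities near $K$, hence an elementary first integral $F=\prod f_i^{\gamma_i}$ on a neighbourhood of any point of $K$), the $\mathcal F$-saturation $\mathrm{Sat}(T_p)$ contains a set of the form $W_p\setminus K$ for some neighbourhood $W_p$ of the relevant local branch through $p$; since $p$ is regular, the local separatrix is just the smooth germ of $K$ at $p$, so $W_p$ is genuinely a neighbourhood of $p$ in $M$.

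Next I would argue that the set
$$ A=\{\,q\in K : \mathrm{Sat}(T_p)\ \text{contains}\ W_q\setminus K\ \text{for some neighbourhood}\ W_q\ \text{of}\ q\,\} $$
is open, closed and non-empty in $K$, whence $A=K$ by connectedness. Non-emptiness is the previous paragraph. Openness is immediate from the very definition (if $q\in A$ then every nearby point is in $A$, using the same $W_q$). For closedness, take $q$ in the closure of $A$ and pick a neighbourhood $\Omega$ of $q$ on which $\mathcal F$ has an elementary first integral, so that its only local separatrix along $K\cap\Omega$ is the germ of $K$ itself. Choose $q'\in A$ close to $q$; then $\mathrm{Sat}(T_p)$ already contains a punctured neighbourhood $W_{q'}\setminus K$. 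Pick a regular point $p'\in (W_{q'}\cap K)$ near $q$ with a small transversal $T_{p'}$; since the plaque through $p'$ inside $W_{q'}\setminus K$ meets $T_p$-saturated leaves, one has $T_{p'}\subset \mathrm{Sat}(T_p)$ up to shrinking. Now apply the lemma at $p'$ inside $\Omega$: $\mathrm{Sat}(T_{p'})\supset W_{p'}\setminus K$ for a full neighbourhood $W_{p'}$ of $p'$, and by transitivity of the saturation relation $\mathrm{Sat}(T_p)\supset W_{p'}\setminus K$. Covering a neighbourhood of $q$ in $K$ by finitely many such $W_{p'}$ (a compactness argument on $K\cap\overline\Omega'$ for a smaller $\Omega'\ni q$) gives $q\in A$. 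Hence $A=K$.

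Finally, from $A=K$ one extracts the desired open set: for every $q\in K$ we have a neighbourhood $W_q$ of $q$ with $W_q\setminus K\subset \mathrm{Sat}(T_p)$; setting $W=\bigcup_{q\in K}W_q$ yields a neighbourhood of $K$ in $M$ with $W\setminus K\subset \mathrm{Sat}(T_p)$, which is exactly the conclusion. The only genuinely delicate point is the closedness step, and there the subtlety is purely the flow-box bookkeeping: one must make sure that the punctured neighbourhood produced near $q'$ actually contains a regular point whose transversal lies in $\mathrm{Sat}(T_p)$, and that the local first integral on $\Omega$ has no separatrix transverse to $K$ — which is precisely guaranteed by the "elementary singularities along $K$" hypothesis together with Theorem \ref{structlocale}. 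Once that is in place, transitivity of leaf-saturation and the connectedness of $K$ do the rest.
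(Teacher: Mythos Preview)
Your overall strategy---an open-closed argument on $K$ spreading the local lemma via flow-boxes---is precisely the ``simple argument de flow-box'' the paper invokes without further detail. However, your closedness step has a real gap when the limit point $q$ is singular. You produce neighbourhoods $W_{p'}$ of nearby \emph{regular} points $p'$ and then try to cover a neighbourhood of $q$ by finitely many of them via compactness; but a flow-box around a regular point never contains the singularity, so no finite union of such $W_{p'}$ covers $q$, and your compactness argument cannot close. The whole force of the preceding lemma is that it manufactures a punctured neighbourhood of the \emph{singular origin} from a transversal at a single regular point on one of its separatrices. You should apply it once, with the origin placed at $q$ and with $p'$ playing the role of the lemma's regular point (chosen so that $T_{p'}\setminus\{p'\}\subset\mathrm{Sat}(T_p)$ via some $q'\in A$), to obtain $\mathrm{Sat}(T_{p'})\supset W_q\setminus\{\prod_i f_i=0\}$ in one stroke; no covering is needed.

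A second point: your claim that the elementary-singularity hypothesis together with Theorem~\ref{structlocale} forces all local separatrices at $q$ to lie in $K$ is not correct. Nothing rules out branches $\{f_i=0\}$ transverse to $K$---this is exactly the phenomenon measured by Lemma~\ref{lemautointersec} and Proposition~\ref{seplocale}. When such an extra branch $S$ exists, the lemma only yields $W_q$ minus \emph{all} local separatrices, strictly smaller than $W_q\setminus K$, and $S$ (being a leaf that meets no small transversal based on $K$ near $q$) is not recovered by saturation. The corollary as written tacitly assumes $K$ already contains every local separatrix along it, or else its conclusion should read $W\setminus K'$ with $K'$ the union of those separatrices; your flow-box argument is fine under either reading, but the justification you offer for the absence of extra separatrices is wrong.
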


  {\it Démonstration du théorème \ref{dévelop}}. Soit $g$ une métrique hermitienne sur $M$ et $V$ l'ouvert de $M\setminus H$ où $\mathcal F$ est régulier et où son fibré tangent $T\mathcal F$ est donc bien défini. Munissons ${T\mathcal F}^\perp$ de la métrique induite par celle qu'on a implanté sur $N_\mathcal F$ (i.e associée à la métrique euclidienne ou hyperbolique transverse). On récupère ainsi sur $V$ une métrique hermitienne qui préserve cette décomposition orthogonale (en général non holomorphe). 


 Il existe par conséquent 
une suite strictement croissante de  réels $(a_i),\ a_0=0,\ lim_{i\rightarrow +\infty}=a$  associée à des segments géodésiques (pour la métrique $g$) $\alpha_i:[a_i, a_{i+1}[\rightarrow V$ tel que pour chaque indice $i$ et tout $t\in [a_i, a_{i+1}[$, on ait 

    $$\rho \bigl ({\pi}^{-1}\alpha_i(t)\bigr )=\gamma (t).$$
Quitte à considérer une sous-suite si nécessaire, on peut de plus supposer que $p_i:=\alpha_i (a_i)$ converge vers $p\in H\cup Sing\mathcal F$. Notons ${\mathcal L}_i$ la feuille de $\mathcal F$ passant par $p_i$. On peut maintenant invoquer le lemme et corollaire précédents ainsi que la proposition \ref{seplocale} pour conclure qu'il existe $q\in V$ tel que pour tout voisinage $V_q$ de $q$, on ait ${\mathcal L}_i\cap V_q\not =\emptyset$ pour $i$ assez grand.
 Fixons $V_q$ et $i$ comme ci-dessus; soient $q_i\in V_q\cap {\mathcal L}_i$ et  $\beta:[0,1]\rightarrow {\mathcal L}_i$ un chemin joignant  $p_i$ à $q_i$ et $h_\beta$ l'application d'holonomie correspondante, dont la différentielle induit donc une isométrie entre  ${T_{p_i}\mathcal F}^\perp$ et  ${T_{q_i}\mathcal F}^\perp$.

Soit $v_i=dh_\beta ({\alpha_i}'(t_i)$; quitte à prendre $i$ suffisamment grand, on peut choisir $q_i$ de telle sorte que le segment géodésique $\alpha: [0,a_i]\rightarrow V_q, \alpha'(0)=v_i$ soit bien défini; ceci contredit visiblement le fait que $a_i\notin\rho(N)$.

\begin{rem}
 On vient en fait d'établir la propriété suivante:

\noindent il existe $r>0$ tel que sur toute feuille $\mathcal L$ de $\mathcal F$ en restriction à $M\setminus H$, il existe $q\in \mathcal L$ tel que la boule géodésique $B(q,r)$ (pour la métrique $g$) soit bien définie et tel que $\mathcal F$ admette une intégrale première admissible sur $B(q,r)$.
\end{rem}



En ce qui concerne la connexité, l'argument est assez similaire. Notons $\mathcal R$ la relation  d'appartenance à la même composante connexe des fibres de l'application développante $\rho$. Cette dernière induit sur $N/\mathcal R$ (muni de la topologie quotient) une application continue ${\rho}_\mathcal R$ surjective à valeur dans $U^\varepsilon$ qui est de plus un {\it homéomorphisme local}.

 Plus précisément, soit $T$ une transversale à
$\pi^*\mathcal F$ sur laquelle la développante $\rho$ est injective; soit $U_T$ l'ouvert de $N$ obtenu en saturant $T$ par $\mathcal R$. Il est alors clair que $\rho_\mathcal R$ définit un homéomorphisme entre les ouverts $U_T/\mathcal R$ et $\rho (T)$. 
cet homéomorphisme local  est en fait un revêtement par la remarque ci-dessus; les fibres de $\rho$ sont donc connexes par simple connexité de $U^\varepsilon$.

\qed

\section{Uniformisation et dynamique}\label{dyn}

La structure riemanienne transverse de nos feuilletages, telle qu'elle est précisée dans la section \ref{unifo}, peut éventuellement ne plus être définie sur un sous ensemble analytique qui contient le support $H$ de la partie négative de ${N_\mathcal F }^*$. Le fait que l'on ait un contrôle explicite de ce type de dégénerescence n'affecte pas profondément leur dynamique en comparaison du cas classique, i.e les feuilletages transversalement riemanniens sur une variété (réelle) compacte. Rappelons à cet effet quelques propriétés  de ces derniers (cf \cite{mo}).

-La variété est union disjointe des minimaux du feuilletage.

-Lorsque le feuilletage est de codimension 2 réelle, un minimal est de l'un des trois types suivants:soit une feuille compacte, soit la variété elle-même, soit une hypersurface réelle.


Nous réadoptons les notations de la section \ref{unifo}.

Soit $G$ l'image de la représentation $r$ et $G_0$ la composante neutre de son adhérence $\overline{G}$ dans le groupe $I^\varepsilon$.

Dans la mesure où nous manipulons des feuilletages à priori singulier (en particulier si $H\not=\emptyset$), il sera approprié de modifier légèrement la définition de feuille pour se ramener en quelque sorte au cas régulier.

A cet effet, soit $c\in U^\varepsilon$ et $F_c$ la fibre $p^{-1}(c)$. Posons $G_c=\pi (F_c)$; par construction $G_c=G_{c'}$ si et seulement si il existe $g\in G$ tel que $c'=g(c)$ et $G_c\cap G_{c'}=\emptyset$ sinon.

Il y a deux configurations possibles: 

-ou bien il existe $p\in Sing\ \mathcal F\cup H$ et une séparatrice locale $X_p$ de $\mathcal F$ en $p$ telle que $X_p\setminus H\subset G_c$, auquel cas on pose ${\mathcal L}_c=G_c\cup\mathcal C$, où $\mathcal C$ est la composante connexe de $H$ contenant $p$;

-ou bien il n'existe pas de tel $p$, auquel cas on pose ${\mathcal L}_c=G_c$. 

\begin{definition}
 Un sous ensemble de $M$ de la forme ${\mathcal L}_c$ est appelée {\bf feuille adaptée } du feuilletage $\mathcal F$.
\end{definition}

\begin{definition}
 Un sous-ensemble $\mathcal M$ de $M$ est appelé {\bf minimal adapté} du feuilletage si $\mathcal M$ est l'adhérence dans $M$ d'une feuille adaptée  et est minimal pour l'inclusion 
\end{definition}

Remarquons d'abord  que la collection $\{{\mathcal L}_c\}_{c\in U^\varepsilon}$ définit une partition de $M$; en effet, soit $X_p$, $X_{p'}$ deux germes de séparatrices de $\mathcal F$ localisées en deux point $p,p'$ d'une même composante connexe $\mathcal C$ de $H$ et telles que $X_p, X_{p'}$ ne coïncident pas avec une branche locale de $\mathcal C$. Au voisinage de $\mathcal C$; soit $\eta_\mathcal C$ la forme logarithmique produite par le théorème \ref{formsemiglob} sur un voisinage de $\mathcal C$.
Sur $V$, on peut choisir $m\in X_p\setminus \mathcal C$, $m'\in X_{p'}$ ainsi qu'un lacet $\gamma\in V\setminus\mathcal C$ joignant $m$ à $m'$. Puisque les résidus de $\eta$ le long de ces deux séparatrices locales valent $1$, on obtient au voisinage de $m$ et $m$ deux sections $F_m,F_{m'}$ de ${\mathcal I}^\varepsilon$ de la forme $e^{\int\eta}$ de telle sorte que $F_{m'}$ soit obtenu comme prolongement analytique de $F_m$ le long de $\gamma$. Puisque $F_m(m)=F_{m'}(m')=0$, on conclut que $X_p$ et $X_{p'}$ appartiennent à une même feuille adaptée.

On notera $M/\mathcal F$ l'espace quotient associé à cette partition. On obtient alors une bijection canonique $\Psi: M/\mathcal F\rightarrow U^\varepsilon/G$ induite par la correspondance ${\mathcal L}_c\rightarrow c$.

En combinant ce qui vient d'être dit avec le corollaire \ref{satsemiglob} on obtient l'énoncé suivant, qu'on peut interpréter comme une sorte d'uniformisation de l'espace des feuilles.

\begin{THM}
 L'application $\Psi$ ci-dessus est un homéomorphisme (pour la topologie quotient à la source et au but).
\end{THM}

Par compacité de $M$, on a en particulier le
\begin{cor}
 Le groupe $G$ est co-compact, i.e: $U^\varepsilon/G$ est compact (non nécessairement séparé) pour la topologie quotient 
\end{cor}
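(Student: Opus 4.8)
Le plan est d'obtenir ce corollaire sans effort suppl\'ementaire, en combinant l'hom\'eomorphisme $\Psi:M/\mathcal F\rightarrow U^\varepsilon/G$ du th\'eor\`eme pr\'ec\'edent avec la compacit\'e de la vari\'et\'e ambiante $M$.

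Je commencerais par rappeler que la famille $\{\mathcal L_c\}_{c\in U^\varepsilon}$ des feuilles adapt\'ees forme une partition de $M$ (fait \'etabli plus haut); la projection canonique de $M$ sur $M/\mathcal F$, qui associe \`a chaque point l'unique feuille adapt\'ee le contenant, est donc bien d\'efinie et surjective, et elle est continue puisque $M/\mathcal F$ est par d\'efinition muni de la topologie quotient. L'image continue d'un espace compact \'etant compacte, $M/\mathcal F$ est compact; en transportant cette propri\'et\'e par l'hom\'eomorphisme $\Psi$, on conclut que $U^\varepsilon/G$ est compact, c'est-\`a-dire que $G$ est co-compact.

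Il n'y a pas d'obstacle s\'erieux \`a cette derni\`ere \'etape: tout le travail a \'et\'e effectu\'e en amont, dans la preuve du th\'eor\`eme d'uniformisation pr\'ec\'edent --- lui-m\^eme appuy\'e sur la compl\'etude et la connexit\'e des fibres de l'application d\'eveloppante, ainsi que sur la saturation semi-globale des transversales (corollaire \ref{satsemiglob}). Je prendrais soin de souligner que l'argument n'utilise aucune propri\'et\'e de s\'eparation, ce qui est essentiel ici: $U^\varepsilon/G$ n'est en g\'en\'eral pas s\'epar\'e, par exemple d\`es que le feuilletage poss\`ede une feuille adapt\'ee dense.
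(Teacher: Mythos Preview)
Ta preuve est correcte et reproduit exactement l'argument du papier, qui se contente d'\'ecrire \og Par compacit\'e de $M$, on a en particulier le \fg{} avant d'\'enoncer le corollaire. Tu as simplement explicit\'e ce que le papier sous-entend : la projection canonique $M\to M/\mathcal F$ est continue surjective, donc $M/\mathcal F$ est compact, et l'hom\'eomorphisme $\Psi$ transporte cette compacit\'e sur $U^\varepsilon/G$.
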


\begin{rem}
 Puisque toute les singularités du feuilletage sont de nature élémentaire, on pourra également noter que les feuilles adaptées qui ne  sont pas feuille au sens ordinaire sont en nombre fini 
\end{rem}

Au vu de ce qui précède, on peut observer que les propriétés dynamique du feuilletage sont fidèlement reflétés par celle de du groupe $G$. Il s'agit de décrire les diverses formes que peut présenter celui-ci (ou plus exactement son adhérence $\overline{G}$).

On procède d'abord par élimination en supposant   que $U^\varepsilon=\DD$ et $G$ affine (c'est-à-dire fixe un point sur le bord du disque). En particulier, $G$ ne contient pas de transformation elliptique non triviale. Il n'y a donc  pas d'obstruction à ce que  $F=e^{\int\eta_H}$ soit une intégrale première  {\it holomorphe} du feuilletage sur un voisinage $V$ de $H$. Par construction, $F_{|V\setminus H}\in{\mathcal I}^1 (V\setminus H)$ et le diviseur des zéros de $dF$ coïncide avec la partie négative $N$. En utilisant maintenant que $G$ est affine, on obtient finalement que $\mathcal F$ est défini par une section sans diviseur de zéro de ${N_\mathcal F}^*\otimes\mathcal O(-N)\otimes E$ où $E$ est un fibré en droite holomorphe {\it plat} (c'est peut-être plus évident à voir si l'on se place sur le modèle du demi-plan); ceci est incompatible avec le fait que la partie positive $Z$ est non triviale.

En expoitant par ailleurs la co-compacité de $G$ (et donc de $\overline{G}$), on aboutit facilement au descriptif complet qui suit.
\begin{prop}
 Le groupe $\overline {G}$ se présente sous l'une des formes exclusives suivantes (à conjugaison près dans ${\mathcal I}^\varepsilon$).\\

a) $G=\overline{G}$ et est donc un réseau co-compact.\\

b) L'action de $G$ est minimale et dans ce cas

$\varepsilon =0$ et $\overline {G}$ contient le sous groupe des translations de ${\mathcal I}^0$

ou 

$\varepsilon =1$ et $\overline G={\mathcal I}^1$.\\

c) $\varepsilon=0$ et $\overline{G}$ contient le groupe des translations réelles $T=\{t_\alpha, \ \alpha\in\RR\}$ ($t_\alpha (z)=z+\alpha)$ et plus précisément

$\overline G=\langle T,t\rangle\ (*)$

ou

$\overline G=\langle T,t,s\rangle\ (**)$

avec $t(z)=z+ai$ où $a$ est un réel fixé non nul et $s(z)=-z$.

\end{prop}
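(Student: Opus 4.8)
The strategy is to run a case analysis on the group $\overline{G}$ driven entirely by the already-established rigidity of the developing map ($\rho$ complete with connected fibres, Theorem~\ref{d�velop}) and the co-compactness of $G$ (the Corollary just above). First I would dispose of the \emph{euclidean} case $\varepsilon=0$: here $U^0=\CC$ and $I^0=\CC\rtimes S^1$ is the group of affine similarities $z\mapsto e^{i\theta}z+b$. A co-compact subgroup $G$ of $I^0$ has a well-defined rotation part $\rho_\theta\colon G\to S^1$; I would show that either the image of $\rho_\theta$ is finite (in which case, after passing to a finite-index subgroup and quotienting, $\overline G$ contains a full-rank lattice of translations, landing in case~(a) or in the reducible cases~(c) depending on whether the translation lattice has real rank $1$ or $2$), or the image is infinite, hence dense, forcing $\overline G$ to contain all rotations and therefore (by co-compactness, which prevents $\overline G$ from being just $S^1$) the whole translation subgroup --- this is case~(b), $\varepsilon=0$.

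Second I would treat the \emph{hyperbolic} case $\varepsilon=1$, $U^1=\DD$, $I^1=\mathrm{PSL}(2,\RR)$. The key dichotomy is whether $\overline G$ acts minimally on $\DD$ or not. If $\overline G$ is discrete, then by co-compactness it is a co-compact lattice: case~(a). If $\overline G$ is not discrete, its identity component $G_0=\overline G{}^\circ$ is a nontrivial connected Lie subgroup of $\mathrm{PSL}(2,\RR)$; the connected subgroups are, up to conjugacy, a one-parameter elliptic group, a one-parameter parabolic group, a one-parameter hyperbolic group, the (two-dimensional) affine group fixing a boundary point, or all of $\mathrm{PSL}(2,\RR)$. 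The paragraph preceding the Proposition already \emph{eliminates} the affine alternative: if $\overline G$ (hence $G_0$) were contained in the stabiliser of a boundary point, then $\mathcal F$ would have a holomorphic first integral $F=e^{\int\eta_H}$ near $H$ with $\mathrm{div}(dF)=N$, exhibiting $\mathcal F$ as a section of ${N_\mathcal F}^*\otimes\mathcal O(-N)\otimes E$ with $E$ flat, contradicting $Z\neq 0$ (equivalently, $\kappa\le 0$ on the one hand but the numerical dimension being $1$ on the other). The elliptic and purely-parabolic one-parameter possibilities for $G_0$ are ruled out the same way, since a one-parameter parabolic subgroup and a one-parameter hyperbolic subgroup both fix a boundary point, so $\overline G$, normalising $G_0$, would again be affine; a one-parameter elliptic $G_0$ has a global fixed point in $\DD$, again contradicting co-compactness (the orbit space would not be compact) unless $\overline G$ is all of $\mathrm{PSL}(2,\RR)$. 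Hence the only surviving non-discrete possibility is $G_0=\mathrm{PSL}(2,\RR)$, i.e.\ $\overline G=I^1$ acting minimally: case~(b), $\varepsilon=1$.

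Third, for the reducible euclidean sub-cases $(*)$ and $(**)$ I would examine the translation lattice $\Lambda=\overline G\cap\CC$ more carefully. Co-compactness of $\overline G$ together with the finiteness of the rotation image forces $\Lambda$ to be co-compact in a real subspace of $\CC$; if $\Lambda$ already has real rank $2$ we are in case~(a), so the genuinely ``reducible'' situation is $\Lambda$ of real rank $1$, which after conjugation we may take to be the real axis, giving $T=\{z\mapsto z+\alpha:\alpha\in\RR\}\subset\overline G$. The remaining generators of $\overline G$ must normalise $T$, hence are of the form $z\mapsto \pm z+b$; co-compactness forces the subgroup generated by $T$ and the extra elements to act co-compactly on $\CC$, which (since $T$ already fills one direction) pins the possibilities down to either a single extra translation $t(z)=z+ai$ ($a\neq 0$), giving $\overline G=\langle T,t\rangle$, or that together with the involution $s(z)=-z$, giving $\overline G=\langle T,t,s\rangle$ --- these are $(*)$ and $(**)$. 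Throughout one uses that elliptic elements of infinite order are excluded in the reducible regime, and that $\overline G$ cannot contain a dense set of rotations without being case~(b).

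\textbf{Main obstacle.} The serious point is not the Lie-theoretic list of connected subgroups --- that is standard --- but \emph{verifying that co-compactness (topological, on a possibly non-Hausdorff quotient) genuinely excludes the affine/parabolic alternatives}, which is exactly where the geometry of the foliation re-enters: the argument in the paragraph before the Proposition (holomorphic first integral near $H$ $\Rightarrow$ $\mathcal F$ is a twisted logarithmic form with flat twist $\Rightarrow$ $Z=Z(\alpha)$ trivial) must be invoked, and one has to be careful that ``$G$ affine'' is used in the precise sense of fixing a boundary point of $\DD$, and that the resulting section of ${N_\mathcal F}^*\otimes\mathcal O(-N)\otimes E$ is honestly without zero divisor so that its existence forces $c_1({N_\mathcal F}^*)=\{N\}$, i.e.\ $Z=0$. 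Assembling the list into the stated mutually exclusive trichotomy $a)/b)/c)$ then amounts to bookkeeping on the rotation image and the rank of the translation lattice.
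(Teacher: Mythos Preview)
Your proposal follows exactly the line the paper takes: the paper itself disposes of the case $\varepsilon=1$, $G$ affine by the geometric argument (holomorphic first integral $F=e^{\int\eta_H}$ near $H$, hence $\mathcal F$ defined by a section of ${N_{\mathcal F}}^*\otimes\mathcal O(-N)\otimes E$ with $E$ flat, contradicting $Z\neq 0$), and then simply says that co-compactness of $G$ ``easily'' yields the stated list; your case analysis is a legitimate fleshing-out of that second step.

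One small slip to fix: when $G_0$ is a one-parameter \emph{hyperbolic} subgroup, its normaliser in $\mathrm{PSL}(2,\RR)$ is \emph{not} affine --- it is the group preserving the pair of fixed boundary points, hence contains an order-two element swapping them. So you cannot conclude directly that $\overline G$ is affine and invoke the paper's elimination. The repair is immediate, though: whether $\overline G$ equals $G_0$ or its full normaliser, the quotient $\DD/\overline G$ is an open (half-)interval in the angular coordinate and is not compact, so this case is excluded by co-compactness alone, without recourse to the affine argument. (Alternatively, pass to the index-two affine subgroup and run the geometric argument on the associated double cover.) With that correction your trichotomy goes through as written.
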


En synthétisant ce qui a été observé depuis la section \ref{unifo}, on aboutit à la description de la dynamique du feuilletage,  laquelle se décline suivant les différents cas identifiés dans la proposition précédente.

\begin{THM}

L'adhérence de toute feuille adaptée est un minimal adapté; de plus

dans le cas a), $\mathcal F$ est une fibration holomorphe à fibres connexes au dessus de la surface de Riemann compacte (orbifold) $U^\varepsilon/G$;

dans le cas b), $M$ est l'unique minimal adapté;

dans le cas c), chaque minimal adapté est une hypersurface analytique réelle.\footnote{Cet énoncé n'est pas tout à fait exact dans le cas (**) où l'on pourrait avoir quelques minimaux adaptés semi-analytiques, phénomène qu'on peut attribuer à  l'existence {\it a priori} de composantes de $H$ dont le coefficient correspondant dans la décomposition Zariski est un demi entier (ces composantes apparaissant alors comme le bord des minimaux en question). On peut toutefois se ramener au cas (*) au moyen d'un revêtement double (ramifié) associé à une section de ${({N_\mathcal F}^*)}^{\otimes 2}$) . }

\end{THM}

\begin{cor}
 Supposons que $\mathcal F$ n'est pas une fibration; alors l'ensemble des hypersurfaces irréductibles invariantes par le feuilletage forme une famille exceptionnelle En particulier, son cardinal ne peut excéder $\rho (M)$.
\end{cor}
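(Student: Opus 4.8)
The plan is to deduce the corollary from the structure theorem on the dynamics of $\mathcal F$ proved just above. Since $\mathcal F$ is not a fibration we are not in case a), hence we are in case b) ($M$ is the unique adapted minimal) or in case c) (every adapted minimal is a real-analytic hypersurface of $M$). The heart of the argument is the assertion that, in both of these cases, \emph{every invariant hypersurface of $\mathcal F$ is contained in $H=\vert N\vert$}. Granting it, the corollary follows at once: the irreducible components $D_1,\dots,D_p$ of $N$ are themselves invariant hypersurfaces (each $D_i$ is a separatrix, by Proposition~\ref{Z(alpha)nef}~i)), so they are \emph{exactly} the irreducible invariant hypersurfaces of $\mathcal F$; this family is exceptional by Theorem~\ref{deczar}, and by Proposition~\ref{Z(alpha)nef}~viii) the matrix $m_{ij}=\{D_i\}\{D_j\}\{\theta\}^{n-2}$ is then negative definite, whence the classes $\{D_1\},\dots,\{D_p\}$ are linearly independent in $H^{1,1}(M,\mathbb R)$ (if $\sum_i c_i\{D_i\}=0$ then $\sum_i c_im_{ij}=0$ for all $j$, so $c=0$) and therefore $p\le\rho(M)$.

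To prove the assertion, assume for contradiction that some invariant hypersurface is not contained in $H$, and pick an irreducible one, $K$. Then $\tilde K:=\pi^{-1}(K\setminus H)$ is a non-empty closed analytic subset of the universal cover $N$ of $M\setminus H$, invariant under $\tilde{\mathcal F}=\pi^*\mathcal F$ and of pure dimension $n-1$. Since $n-1$ is the dimension of the leaves, each irreducible component of $\tilde K$ contains a full plaque through a generic point and hence coincides with a single leaf of $\tilde{\mathcal F}$, on which the developing map $\rho$ is constant; distinct components are distinct, hence disjoint, leaves, and being the irreducible components of an analytic set they are locally finite in $N$. Let $\Lambda:=\rho(\tilde K)\subseteq U^\varepsilon$ denote the (countable) set of these constant values. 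Because $\tilde K$ is the inverse image under the covering $\pi$ of a subset of $M\setminus H$, it is $\pi_1(M\setminus H)$-invariant, and the equivariance $\rho(\gamma\cdot x)=r(\gamma)\rho(x)$ gives $g\cdot\Lambda=\Lambda$ for every $g\in G=\mathrm{Im}(r)$; since $K$ is irreducible, $K\setminus H$ is connected and $\Lambda$ is in fact a single $G$-orbit. One checks, using the local finiteness of the components of $\tilde K$ together with the facts that $\rho$ is a submersion off $\pi^{-1}(\mathrm{Sing}\,\mathcal F)$ and is injective on transversals, that $\Lambda$ is moreover closed and discrete in $U^\varepsilon$; being closed and $G$-invariant, $\Lambda$ is then $\overline G$-invariant.

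It remains to rule out a non-empty discrete $\overline G$-invariant subset of $U^\varepsilon$ in cases b) and c). In case b) the group $\overline G$ acts transitively on the connected manifold $U^\varepsilon$ (it is the full isometry group of the disc when $\varepsilon=1$, and contains every translation of $\CC$ when $\varepsilon=0$), so its only closed invariant subsets are $\emptyset$ and $U^\varepsilon$; as $\Lambda$ is discrete it is not all of $U^\varepsilon$, hence $\Lambda=\emptyset$, contradicting $K\not\subseteq H$. In case c) the group $\overline G$ contains the group $T$ of real translations of $\CC$, whose orbits are entire horizontal lines; a $T$-invariant closed set is a union of horizontal lines and so cannot be a non-empty discrete set, again forcing $\Lambda=\emptyset$, a contradiction. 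This establishes the assertion, and with it the corollary.

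The main obstacle is this assertion, and within its proof the delicate point is the discreteness of $\Lambda$: one must exclude the possibility that an invariant hypersurface lying outside $H$ ``winds densely'', in the transverse direction, around the leaf space — concretely, that its lift to $N$ is an analytic union of leaves whose $\rho$-values accumulate in $U^\varepsilon$. Once that is secured, the dynamical input, namely the list of possible groups $\overline G$ in the non-fibration case recalled before the corollary, closes the argument through only the formal manipulations above.
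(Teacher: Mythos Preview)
Your argument is correct in outline and reaches the goal, but it takes a genuinely different route from the paper's.

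The paper argues by contradiction at the level of the \emph{family}: if the collection of invariant irreducible hypersurfaces were not exceptional, one extracts a \emph{connected} invariant hypersurface $K=K_1\cup\dots\cup K_r$ whose components still fail to be exceptional, and then invokes Proposition~\ref{seplocale}: non-exceptionality of $\{K_1,\dots,K_r\}$ is equivalent to $\vert(\eta_K)_\infty\vert=K$, which forces $K$ to be an adapted leaf. Since $K$ is compact, $\overline K=K$ would then be an adapted minimal, and this is impossible in cases b) and c) of the structure theorem (the unique minimal is $M$, resp.\ a real hypersurface). So the paper's proof is two lines once Proposition~\ref{seplocale} and the notion of adapted leaf are in hand.

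You instead prove the sharper assertion that in cases b) and c) \emph{every} irreducible invariant hypersurface already lies in $H$, by lifting to the universal cover and showing that such a $K\not\subset H$ would produce a nonempty closed discrete $\overline G$-invariant subset $\Lambda\subset U^\varepsilon$, incompatible with the shape of $\overline G$. This bypasses Proposition~\ref{seplocale} entirely and yields a stronger intermediate conclusion (the invariant hypersurfaces are exactly the $D_i$), at the cost of the discreteness argument, which you rightly flag as the delicate step. One point you gloss over there: to pass from ``$T\cap\tilde K$ is discrete on a small transversal'' to ``$\Lambda$ is discrete in $U^\varepsilon$'' you need that, for regular $c$, the connected fibre $\rho^{-1}(c)$ meeting $\tilde K$ is entirely contained in $\tilde K$; this uses the connectivity of fibres of $\rho$ (Th\'eor\`eme~\ref{d�velop}), which you should cite explicitly. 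With that ingredient your verification goes through, and since the singular values are finite in number they do not spoil local finiteness.

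In short: the paper's proof is shorter and leans on the logarithmic-form machinery already built (Proposition~\ref{seplocale}); yours is more self-contained on the dynamical side, trades that machinery for a direct analysis of the developing map, and actually pins down the invariant hypersurfaces rather than merely showing their family is exceptional.
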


\begin{proof}
 Raisonnons par l'absurde; il existe alors une hypersurface connexe $K=K_1\cup K_2...\cup Kr$ invariante par $\mathcal F$ et telle que la famille $\{K_1,..,K_r\}$ ne soit pas exceptionnelle.  En vertu de la proposition \ref{seplocale}, $K$ est nécessairement une feuille adaptée, ce qui contredit le théorème précédent. 
\end{proof}

\section{Le cas des feuilletages à classe canonique (numériquement) triviale \label{cantriviale}}

Soit $\mathcal F$  une distribution de codimension $1$ sur une variété Kähler compacte $M$ ($dim_\CC M=n$). On considère le fibré déterminant associé 

$$det\ \mathcal F={({\ \bigwedge}^{n-1}T_\mathcal F)}^{**}$$
qu'on peut donc voir comme sous faisceau inversible de ${\bigwedge}^{n-1}T_M$.

 On supposera en outre que \\

{\bf(*) on a la condition de trivialité numérique}

$$   c_1(\mathcal F):=c_1(det\ \mathcal F)=0$$
 \\

{\bf(**)le fibré canonique $K_M$ est pseudo-effectif.}\\

 Par adjonction, on obtient que 

$$K_M={(det\ \mathcal F)}^*\otimes {N_\mathcal F}^*$$

\noindent en conséquence de quoi le conormal ${N_\mathcal F}^*$ est numériquement équivalent  à $K_M$ et est donc {\it pseudo-effectif}. En réinvoquant le théorème \ref{de}, on peut conclure que $\mathcal F$ est intégrable. On désignera encore par $\mathcal F$ le feuilletage de codimension 1 associé. Il s'agit donc d'une sous classe de la famille de feuilletages considérés dans cet article; lorsque $\mathcal F$ est régulier on obtient dans \cite{to} une classification essentiellement complète ( pour faire bref, $\mathcal F$ provient, à revêtement fini près, d'un feuilletage linéaire sur le tore ou est une fibration isotriviale).\\

Comme l'indique le théorème ci-dessous, cette situation est en fait générale.

\begin{THM} Sous les hypothèses ci-dessus, le feuilletage $\mathcal F$ est nécessairement régulier.\footnote{Sous les hypothèses numériques (*) et (**), ce résultat persiste en fait pour des distributions de codimension quelconque (\cite{lpt}) }

\end{THM}

\begin{proof} Utilisons la forme positive $\eta_T=ie^{2\varphi} \omega\wedge \overline{\omega}$ introduite en section 1 (formule (\ref{eta})).
 
On rappelle que $\eta_T$ est fermée (au sens des courants) et permet donc de produire par dualité de Poincaré-Serre une classe non triviale $\{\beta\}\in H^{1,1}(M)$. Remarquons qu'on hérite par ailleurs d'une décomposition globale naturelle 

$$\eta_T=\alpha_1\wedge\alpha_2$$
où $\alpha_1=\omega$,
$\alpha_2=-ie^{2\varphi}  \overline{\omega}$ vus respectivement comme $(1,0)$ forme à valeur dans $N_\mathcal F$ et courant de bidegré $(0,1)$ à valeurs dans $N_\mathcal F^*$ (qui s'interprète comme forme linéaire continue sur les $(n,n-1)$ formes à valeurs dans  $N_\mathcal F$). 

On constate que $\alpha_1$ et $\alpha_2$ sont $\overline{\partial}$ fermées (c'est évident pour $\alpha_1$ et résulte par exemple de la formule (\ref{integ}) pour $\alpha_2$). Elles induisent donc, puisque $\eta_T$ est non nulle en cohomologie, des classes non triviales $\{\alpha_1\}\in  H_{\overline{\partial}}^{1,0}(N_\mathcal F),\ \{\alpha_2\}\in  H_{\overline{\partial}}^{0,1}(N_\mathcal F^*)$,

 Plus précisément, le cup produit 

$$\{\alpha_1\}\{\alpha_2\}\{\beta\}\in H_{\overline{\partial}}^{n,n}(M)$$ est non nul (il coïncide en effet avec $\{\eta_T\wedge \{\beta\}$).

Le théorème résulte alors d'un simple jeu de réécriture; en notant $c$ la classe de $\alpha_1\wedge\beta$ dans $H_{\overline{\partial}}^{0,n-1}(N_\mathcal F\otimes K_M*)$, on obtient que

$$\{\alpha_2\}c\not=0\label{alpha2cnot=0}$$
Par suite, on a nécessairement

$$\alpha_2\wedge\tilde c\not=0$$ 

où $\tilde c\in\Omega^{(0,n-1)}(E)$ est une forme lisse de bidegré $(o,n-1)$ à valeurs dans le fibré $E= N_\mathcal F\otimes K_M*$ représentant $c$.

D'après la formule d'ajonction précédente, on peut munir $E$ d'une métrique qui en fait {\it un fibré en droites hermitien plat}. 

Relativement à cette métrique (et une métrique kählerienne fixée sur $M$), on peut choisir $\tilde c$ {\it harmonique} Dans ce cas, par symétrie de Hodge, la forme conjuguée $\Omega:=\overline{\tilde c}$ est {\it holomorphe} et $\omega\wedge\Omega$ donne lieu à une section {\it non nulle} de $N_\mathcal F\otimes K_M\otimes E^*=\mathcal O_M$.

Cette section ne peut donc avoir de diviseur de zéros sur $M$, ce qui n'est visiblement possible que si $\mathcal F$ est en fait régulier.
\end{proof}

\begin{rem} On pourrait plus généralement remplacer (*) par l'hypothèse $det\ \mathcal F$ pseudo-effectif (ce qui est donc le cas pour $N_\mathcal F^*)$); lorsque $M$ est projective et $det\ \mathcal F$ n'est pas (numériquement) trivial, des résultats  de Miyaoka (voir par exemple \cite{sh}) garantissent que $M$ est uniréglée, ce qui est bien sûr incompatible avec l'hypothèse (**). Nous pensons que ce type d'obtruction persiste en kählerien mais nous ne connaissons pas de preuves.

\end{rem}

\bigskip

\end{document}